\documentclass[11pt]{amsart} 
\usepackage{amssymb,amsmath,latexsym,enumerate,graphicx,bbm,mathptmx,microtype,cite,color}
\allowdisplaybreaks

\hoffset=0in 
\voffset=0in
\oddsidemargin=0in
\evensidemargin=0in
\topmargin=0.3in 
\headsep=0.15in 
\headheight=8pt
\textwidth=6.5in
\textheight=8.5in

\newtheorem{theorem}{Theorem} [section] \numberwithin{equation}{section}
\newtheorem{corollary}[theorem]{Corollary}

\newtheorem{lemma}[theorem]{Lemma}

\newtheorem{exam}{Example}

\newtheorem{rem}[theorem]{Remark}
\newenvironment{remark}{\begin{rem}\rm}{\end{rem}}

\renewcommand\emptyset{\varnothing}

\newcommand\commentout[1]{}
\newcommand\Def[1]{{\bf #1}}

\newcommand\vol{\operatorname{vol}} 
\newcommand\conv{\operatorname{conv}}

\newcommand\ehr{\operatorname{ehr}}

\newcommand\des{\operatorname{des}} 
\newcommand\aff{\operatorname{aff}} 

\newcommand\ZZ{\mathbb{Z}}

\newcommand\RR{\mathbb{R}}

\newcommand\bu{\mathbf{u}}
\newcommand\bv{\mathbf{v}}
\newcommand\bw{\mathbf{w}}
\newcommand\bx{\mathbf{x}}

\newcommand\bz{\mathbf{z}}
\newcommand\bzero{\mathbf{0}}

\newcommand\hstar{h^\ast}

\makeatletter 
\newtheorem*{rep@theorem}{\rep@title}\newcommand{\newreptheorem}[2]{%
\newenvironment{rep#1}[1]{%
\def\rep@title{\bf #2 \ref{##1}}%
\begin{rep@theorem}}%
{\end{rep@theorem}}}
\makeatother
\newreptheorem{theorem}{Theorem}

\begin{document}

\title{Lattice zonotopes of degree 2}

\author{Matthias Beck}
\address{Department of Mathematics\\
         San Francisco State University\\
         San Francisco, CA 94132\\
         U.S.A.}
\email{mattbeck@sfsu.edu}

\author{Ellinor Janssen}
\address{Mathematisches Institut, Freie Universit\"at, 14195 Berlin, Germany}
\email{ellinorjanssen@gmail.com}

\author{Katharina Jochemko}
\address{Department of Mathematics\\
         KTH Royal Institute of Technology\\
         100 44 Stockholm\\
         Sweden}
\email{jochemko@kth.se}

\keywords{Lattice polytopes, Ehrhart polynomials, zonotopes, classification of polynomials.}

\subjclass[2010]{Primary 52B20; Secondary 05A15, 11H06.}

\date{20 September 2022}

\maketitle

\begin{abstract}
The \emph{Ehrhart polynomial} $\ehr _P (n)$ of a lattice polytope $P$ gives the number of
integer lattice points in the $n$-th dilate of $P$ for all integers $n\geq 0$. The
\emph{degree} of $P$ is defined as the degree of its $h^\ast$-polynomial, a
particular transformation of the Ehrhart polynomial with many useful properties which serves as an important tool for classification questions in Ehrhart theory.

A \emph{zonotope} is the Minkowski (pointwise) sum of line segments. We classify all
Ehrhart polynomials of lattice zonotopes of degree $2$ thereby complementing results
of Scott (1976), Treutlein (2010), and Henk--Tagami (2009). Our proof is constructive: by considering solid-angles and the lattice width, we provide a characterization of all $3$-dimensional zonotopes of degree~$2$.

\end{abstract}


\section{Introduction}

A \textbf{lattice polytope} $P \subset \mathbb{R}^d$ is the convex hull of finitely many points in
$\mathbb{Z}^d$. A fundamental result by Ehrhart states that the
number of lattice points in the $n$-th dilate of $P$ is given by a polynomial in the integer dilation factor~$n$. 
\begin{theorem}[Ehrhart~\cite{ehrhartpolynomial}]
Let $P\subset \mathbb{R}^d$ be a lattice polytope. Then there exists a polynomial $\ehr _P (n)$ such that $|nP\cap \mathbb{Z}^d|=\ehr _P (n)$ for all integers $n\geq 1$.
\end{theorem}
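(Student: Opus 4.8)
The plan is to give the standard generating-function proof, reducing first to the case of a lattice simplex and then exploiting the cone over it. Fix a triangulation of $P$ into simplices all of whose vertices lie in $\ZZ^d$; such a triangulation exists since the vertices of $P$ are themselves lattice points and any point configuration can be triangulated using no new vertices. Writing $P$ as the disjoint union of the relative interiors of all faces of all simplices occurring in this triangulation, one obtains $\ehr_P(n)=\sum_\sigma |\relint(n\sigma)\cap\ZZ^d|$, a finite sum indexed by lattice simplices $\sigma$, where we used $\relint(n\sigma)=n\cdot\relint(\sigma)$. By Möbius inversion over the face poset of each $\sigma$, the function $n\mapsto|\relint(n\sigma)\cap\ZZ^d|$ is an integer linear combination of functions $n\mapsto|n\tau\cap\ZZ^d|$ for \emph{closed} lattice simplices $\tau\le\sigma$; since a finite linear combination of polynomials is again a polynomial, it suffices to prove the theorem when $P=\Delta=\conv(v_0,\dots,v_m)$ is a lattice simplex.

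Next I would pass to the cone construction. Embed $\RR^d$ as the affine hyperplane of height $1$ in $\RR^{d+1}$, set $w_i:=(v_i,1)\in\ZZ^{d+1}$, and let $C:=\cone(w_0,\dots,w_m)=\{\sum_i\lambda_i w_i:\lambda_i\ge 0\}$. Deleting the last coordinate gives a bijection between the lattice points of $C$ at height $n$ and $n\Delta\cap\ZZ^d$, so it is enough to understand $C\cap\ZZ^{d+1}$ graded by the last coordinate. The crucial point is that, because $v_0,\dots,v_m$ are affinely independent and hence $w_0,\dots,w_m$ are linearly independent, the half-open fundamental parallelepiped $\Pi:=\{\sum_i\lambda_i w_i:0\le\lambda_i<1\}$ is a fundamental domain for the semigroup $\ZZ_{\ge 0}w_0+\dots+\ZZ_{\ge 0}w_m$ acting on $C$: every lattice point of $C$ is \emph{uniquely} of the form $p+\sum_i k_i w_i$ with $p\in\Pi\cap\ZZ^{d+1}$ and $k_i\in\ZZ_{\ge 0}$. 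Establishing this tiling statement — both existence and uniqueness of such a representation — is the main technical obstacle; once it is in place the rest is bookkeeping.

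Finally I would translate this into generating functions. Setting all but the last coordinate variable to $1$ and recording the last (height) coordinate by a formal variable $t$, the tiling lemma yields
\[
\sum_{n\ge 0}\ehr_\Delta(n)\,t^n \;=\; \frac{\hstar(t)}{(1-t)^{m+1}}, \qquad \hstar(t):=\sum_{p\in\Pi\cap\ZZ^{d+1}} t^{\,p_{d+1}},
\]
and $\hstar(t)$ is a polynomial since $\Pi$ is bounded, hence contains only finitely many lattice points, each of integer height strictly less than $m+1$; thus $\deg\hstar\le m$. Expanding $(1-t)^{-(m+1)}=\sum_{n\ge 0}\binom{n+m}{m}t^n$ and convolving with $\hstar$ shows that the coefficient of $t^n$ agrees, for every integer $n\ge 0$, with a fixed polynomial in $n$ of degree at most $m\le d$. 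This proves the theorem for simplices, and combined with the reduction step above it yields the result for an arbitrary lattice polytope $P$. (An alternative to the tiling lemma plus generating-function manipulation is Stanley's argument, which identifies $\bigoplus_n(n\Delta\cap\ZZ^d)$ with a finitely generated graded module over a polynomial ring and invokes the rationality of Hilbert series; the combinatorial heart — finite generation — is essentially the same tiling statement.)
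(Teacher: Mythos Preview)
The paper does not give its own proof of this statement: Ehrhart's theorem is quoted as a foundational result with a citation to~\cite{ehrhartpolynomial}, and the reader is referred to~\cite{ccd} for background. There is therefore nothing in the paper to compare against.

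Your argument is the standard cone/generating-function proof (essentially the one in~\cite{ccd}): triangulate, reduce to simplices by inclusion--exclusion over the face poset, lift a simplex to a simplicial cone in $\RR^{d+1}$, tile the cone by $\ZZ_{\ge 0}$-translates of the half-open fundamental parallelepiped, and read off rationality of the generating series with denominator $(1-t)^{m+1}$. The steps are correct; the one phrasing to tighten is ``all faces of all simplices occurring in this triangulation'': you mean the faces of the simplicial \emph{complex} (each distinct face once), not the multiset of faces of the individual maximal simplices, since shared faces would otherwise be overcounted. With that reading the disjoint-union decomposition and the subsequent M\"obius inversion go through as you wrote them.
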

The polynomial $\ehr_P (n)$, which is referred to as the \textbf{Ehrhart polynomial} of $P$,
encodes fundamental properties of the polytope; for example, its degree equals the
dimension of $P$
and its leading coefficient is equal to the volume of~$P$.  As with any class of (combinatorial) polynomials, it is natural to ask how they might be classified.
Scott completely classified all Ehrhart polynomials of $2$-dimensional polytopes, as follows.

\begin{theorem}[Scott~\cite{scott}]\label{thm:scott}
The polynomial $1+e_1n+e_2n^2$ is equal to the Ehrhart polynomial $\ehr _P (n)$ of a $2$-dimensional lattice polytope $P$ if and only if $e_1, e_2 \in \frac 1 2 \mathbb{Z}_{>0}$ 
and one of the following three conditions is satisfied.
\begin{itemize}
\item[(i)] $e_2=e_1-1$, 
\item[(ii)] $e_1< e_2+1$ and $\tfrac{3}{2} \leq e_1 \leq \tfrac{1}{2}e_2+2$, or
\item[(iii)] $(e_1,e_2)=(\tfrac{9}{2},\tfrac{9}{2})$.
\end{itemize}
\end{theorem}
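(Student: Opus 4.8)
The plan is to translate the assertion about $(e_1,e_2)$ into lattice geometry. Write $A=\vol(P)$, $B=|\partial P\cap\ZZ^2|$, and $I=|\relint(P)\cap\ZZ^2|$. By Ehrhart's theorem $\ehr_P$ has degree $2$, leading coefficient $A$, and constant term $1$, and combining this with Pick's theorem $A=I+\tfrac B2-1$ forces $\ehr_P(n)=An^2+\tfrac B2\,n+1$, so $e_2=A$ and $e_1=\tfrac B2$. In particular $e_1\in\tfrac12\ZZ_{>0}$ with $e_1\geq\tfrac32$ (a planar lattice polytope has at least three boundary lattice points), $e_2\in\tfrac12\ZZ_{>0}$, and $e_2-e_1=I-1\in\ZZ$. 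Ehrhart--Macdonald reciprocity gives $\ehr_P(-1)=I$, i.e.\ $I=e_2-e_1+1\geq0$, which is exactly the inequality $e_1\leq e_2+1$, with equality precisely when $I=0$. The correspondence $(e_1,e_2)\leftrightarrow(I,B)$ obtained this way is the bookkeeping device I would use on both sides.

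For the ``only if'' direction I would split on the value of $I$. If $I=0$ then $e_2=e_1-1$, which is case~(i). If $I\geq1$, the decisive input is \emph{Scott's inequality}: a lattice polygon with at least one interior lattice point satisfies $B\leq2I+6$, unless it is unimodularly equivalent to the triangle $3\Delta_2=\conv\{\bzero,(3,0),(0,3)\}$, for which $(I,B)=(1,9)$. Granting this, $B\leq2I+6$ reads $e_1\leq\tfrac12 e_2+2$; together with $e_1<e_2+1$ (from $I\geq1$) and the already-noted $e_1\geq\tfrac32$, these are precisely the three inequalities of case~(ii), whereas $3\Delta_2$ has $(e_1,e_2)=(\tfrac92,\tfrac92)$, which is case~(iii). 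Proving Scott's inequality is the main obstacle; I would do it by a case analysis on the lattice width $w$ of $P$, which satisfies $w\geq2$ whenever $I\geq1$ since the relative interior of a width-$1$ polygon lies in an open lattice slab and so contains no lattice point. Realizing $P$ inside a slab $0\leq\langle u,\cdot\rangle\leq w$ and counting boundary lattice points slice by slice yields $B\leq2I+6$ in every case except $3\Delta_2$ (which has width~$3$); separating this single configuration from the borderline cases is the delicate point.

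For the ``if'' direction I would realize every admissible pair $(I,B)$ by an explicit polygon. Case~(i), $I=0$: the triangle $\conv\{\bzero,(B-2,0),(0,1)\}$ has exactly $B$ boundary lattice points, no interior lattice point, and area $\tfrac B2-1$; letting $B$ run over the integers $\geq3$ covers all of~(i). Case~(iii): the triangle $3\Delta_2$. Case~(ii), every integer pair $(I,B)$ with $I\geq1$ and $3\leq B\leq2I+6$: I would begin with the triangles $\conv\{\bzero,(2,0),(0,b)\}$ for even $b\geq4$, which realize $I=\tfrac b2-1$ and $B=b+4=2I+6$, and then repeatedly ``cut a corner''---removing the apex of a unimodular ear resting on a primitive boundary edge, which lowers $B$ by one while fixing $I$---until $B$ descends to $3$. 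Verifying that such a move is always available (so that every value $3\leq B\leq2I+6$ is attained) and that this sweeps out exactly the lattice points satisfying~(ii) is routine but calls for care at the extreme values $e_1=\tfrac32$ and $e_1=\tfrac12 e_2+2$.
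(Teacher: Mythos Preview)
The paper does not contain a proof of this theorem. Theorem~\ref{thm:scott} is quoted from Scott's 1976 paper and is used throughout as a black box (for instance, in the proof of Theorem~\ref{thm:2imzono} and Corollary~\ref{cor:scottzonotopes}); no argument for it appears anywhere in the manuscript. So there is nothing in the paper to compare your proposal against.

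That said, your outline is the standard route to Scott's theorem and is correct as far as it goes: the translation $(e_1,e_2)\leftrightarrow(B/2,A)\leftrightarrow(I,B)$ via Pick and Ehrhart is right, the trichotomy on $I$ matches cases (i)--(iii) exactly, and the explicit polygons you give for the ``if'' direction are fine starting points. The one place where you have not actually given a proof is the heart of the matter, namely Scott's inequality $B\le 2I+6$ for $I\ge 1$ (with the single exception $3\Delta_2$). You say you would do a case analysis on the lattice width, which is indeed how Scott's original argument and most later proofs proceed, but you have not carried it out; the ``separating this single configuration from the borderline cases'' step is precisely where all the work lives. Similarly, the ``cut a corner'' descent in the realizability direction needs a little more precision to guarantee that such a unimodular ear is always available for every $B>3$ without disturbing~$I$. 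These are genuine gaps relative to a complete proof, though they are well-trodden in the literature.
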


We note that (i) is equivalent to $P$ having no interior lattice points, 
and (iii) is satisfied if and only if $P$ is unimodularly equivalent to the triangle
$\Delta =\conv \{(0,0), (3,0),(0,3)\}$, where \Def{unimodular equivalence} in $\RR^d$, denoted by $\cong$, refers to equality up to a transformation in GL$_d (\ZZ)$ followed by a lattice translation.

In higher dimensions, the problem of characterization and classification of Ehrhart polynomials is wide open. In particular, in dimension $\geq 3$ the coefficients of the Ehrhart polynomial can be
negative in general (see, e.g.,~\cite{ccd}). An important tool to study
Ehrhart polynomials that remedies the issue of negativity is the \textbf{$h^\ast$-polynomial} $h^\ast _P (t)=\sum _{i=0}^d h_i^\ast t^i$ whose coefficients encode the Ehrhart polynomial in terms of binomial coefficients; if $\dim P=d$ then
\[
\ehr_P (n) \ = \ h^\ast_0 \binom{ n+d } d + h^\ast_1 \binom{ n+d-1 } d + \dots + h^\ast_d \binom n d \, .
\]
The coefficient $h^\ast _d$ is equal to the number of interior lattice points of $P$.
We note that $\binom{ n+d-i } d$ is a polynomial in $n$ for all $0\leq i \leq n$ and that $\left\{ \binom{ n+d } d ,\binom{ n+d-1 } d,\ldots,\binom n d\right\}$ is a basis for all polynomials of degree $\leq d$.
Stanley~\cite{stanleydecomp} proved that the coefficients $\hstar_j$ are always nonnegative integers. 

The \textbf{degree} of the lattice polytope $P$, denoted $\deg P$, is
defined to be the degree of $h^\ast _P (t)$. 
Since $\ehr _P (n)$ is a polynomial of degree $d$, the degree of $P$ is $\le d$. 
It is the smallest natural number $r$ such that the $(d-r)$-th dilate of $P$ does not contain interior lattice points. 

It is a short step to ask the classification question for Ehrhart polynomials for lattice
polytopes with certain degrees. For degree $2$ this was answered by Treutlein, who gave constraints on the coefficients of the $h^\ast$-polynomial of degree-$2$ lattice polytopes, independent of the dimension.

\begin{theorem}[Treutlein~\cite{treutlein}]\label{thm:treutlein}
Let $1 + \hstar_1 \, t + \hstar_2 \, t^2$ be the $\hstar$-polynomial of a lattice polytope. Then
\begin{itemize}
\item[(i)] $h_2^\ast=0$ or
\item[(ii)] $0\leq h_1^\ast\leq  3 \hstar_2 + 3$ or
\item[(iii)] $h_1^\ast =7$ and $h_2^\ast =1$\, .
\end{itemize}
\end{theorem}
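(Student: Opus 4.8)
The plan is to reduce the claim to low dimensions, where in dimension $2$ it is a reformulation of Scott's Theorem~\ref{thm:scott}. Put $d=\dim P$. If $\hstar_2=0$ we are in case~(i) and there is nothing to prove, so assume $\hstar_2\geq 1$; then $\deg P=2$ exactly, which by definition means that $(d-2)P$ has no interior lattice point while $(d-1)P$ does. Ehrhart--Macdonald reciprocity shows moreover that $\hstar_2$ equals the number of interior lattice points of $(d-1)P$, while $\hstar_1=|P\cap\ZZ^d|-d-1$ for every lattice polytope. So the content of the theorem is a single uniform inequality, $\hstar_1\leq 3\hstar_2+3$, valid for every degree-$2$ lattice polytope except for those realising the isolated pair $(\hstar_1,\hstar_2)=(7,1)$ --- among which are Scott's exceptional triangle $3\Delta_2=\conv\{(0,0),(3,0),(0,3)\}$ and its lattice pyramids.

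The reduction rests on the Cayley-decomposition theory of Batyrev--Nill and Haase--Nill--Payne. Once $\dim P$ exceeds an explicit threshold --- a small one when $\deg P=2$ --- a degree-$2$ polytope is a nontrivial Cayley polytope $P\cong\mathrm{Cay}(Q_0,\dots,Q_k)=\conv\bigl(\bigcup_i Q_i\times\{\be_i\}\bigr)$ with $k\geq 1$; its $n$-th dilate meets the $i$-th level of $\Delta_k$ in a copy of $n_iQ_i$, so $\hstar_1$ and $\hstar_2$ are governed by the mixed lattice-point data $\sum_{n_0+\dots+n_k=n}|(n_0Q_0+\dots+n_kQ_k)\cap\ZZ^r|$ of the inner polytopes $Q_i$, which may be taken to lie in a lattice of rank $r$ bounded in terms of $\deg P$ (for degree $2$, $r\leq 3$, and $r=2$ in the principal case). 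Iterating, the whole problem reduces to: (a) a bounded range of low dimensions; and (b) such mixed lattice-point problems over tuples of low-rank lattice polytopes, the prototype being Cayley polytopes over lattice polygons.

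For~(a), dimension $2$ is Theorem~\ref{thm:scott} itself: writing $\hstar_1=e_1+e_2-2$ and $\hstar_2=e_2-e_1+1$, Scott's condition~(i) becomes $\hstar_2=0$, his condition~(iii) becomes $(\hstar_1,\hstar_2)=(7,1)$, and his condition~(ii) forces $\hstar_1\leq 3\hstar_2+3$. In dimension $3$, a degree-$2$ polytope $P$ has $(d-2)P=P$ hollow, so one is classifying $3$-dimensional lattice polytopes without interior lattice points; I would organise these by lattice width. A hollow $3$-polytope of lattice width $1$ is already a Cayley polytope $\mathrm{Cay}(P_0,P_1)$ of two lattice polygons, hence falls under~(b); and since hollow $3$-polytopes of lattice width at least $2$ form finitely many unimodular classes, the rest are checked by hand --- for instance $3\Delta_3$, whose $\hstar$-polynomial is $1+16t+10t^2$ with $16\leq 33$. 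The few higher-dimensional base cases, such as $2\Delta_4$ with $\hstar$-polynomial $1+10t+5t^2$, are handled in the same way. For~(b), for a Cayley polytope $\mathrm{Cay}(Q_0,\dots,Q_k)$ of lattice polygons one computes $\hstar_1=\sum_{i}|Q_i\cap\ZZ^2|-(k+3)$ and $\hstar_2=|\relint(Q_0+\dots+Q_k)\cap\ZZ^2|$, so the assertion $\hstar_1\leq 3\hstar_2+3$ becomes the mixed analogue $\sum_i|Q_i\cap\ZZ^2|\leq 3\,|\relint(Q_0+\dots+Q_k)\cap\ZZ^2|+k+6$ of Scott's polygon inequality $b\leq 2i+7$; the excluded pair $(7,1)$ arises precisely when the polygon data collapses to that of $3\Delta_2$, as for $\mathrm{Cay}(3\Delta_2,\{\bzero\})$, a lattice pyramid over $3\Delta_2$. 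The remaining rank-$3$ mixed problems are handled by the same comparison of boundary and interior lattice points.

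I expect the main obstacle to be precisely this family of Scott-type inequalities for mixed lattice-point data. Scott's argument for a single polygon compares the number of its boundary lattice points with the number of its interior lattice points; replaying this comparison for the Minkowski combinations $n_0Q_0+\dots+n_kQ_k$, with the sharp constant and a complete account of the unique exceptional configuration $3\Delta_2$, is the technical heart of the proof. A secondary difficulty is bookkeeping in the iterated Cayley reduction: one must check that it preserves $\hstar_1$ and $\hstar_2$, not merely the degree, and confirm that the thresholds furnished by these structural results are small enough that only a finite, explicitly listable family of low-dimensional polytopes --- essentially the hollow $3$- and $4$-polytopes --- needs hands-on treatment.
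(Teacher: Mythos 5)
First, note that the paper does not prove Theorem~\ref{thm:treutlein} at all: it is quoted from Treutlein~\cite{treutlein} as a known input, so your attempt can only be measured against the literature. Your plan is in the spirit of how the result is actually proved there (a structure theory reducing degree-$2$ polytopes, up to exceptional cases, to Cayley-type configurations over polygons, followed by a Scott-type inequality), and your bookkeeping identities check out ($\hstar_1=|P\cap\ZZ^d|-d-1$, $\hstar_2=|\relint((d-1)P)\cap\ZZ^d|$, the translation of Scott's conditions, and the Cayley formulas $\hstar_1=\sum_i|Q_i\cap\ZZ^2|-(k+3)$, $\hstar_2=|\relint(Q_0+\dots+Q_k)\cap\ZZ^2|$). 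But as a proof it has a decisive gap: the ``mixed Scott inequality'' $\sum_i|Q_i\cap\ZZ^2|\leq 3\,|\relint(Q_0+\dots+Q_k)\cap\ZZ^2|+k+6$, together with the identification of $(\hstar_1,\hstar_2)=(7,1)$ as the only exception, is not an auxiliary technicality --- in the principal case it \emph{is} the theorem, and you leave it unproved, explicitly deferring it as ``the technical heart.'' Scott's boundary-versus-interior comparison does not simply replay: the right-hand side counts interior points of the Minkowski sum $Q_0+\dots+Q_k$ while the left-hand side counts points of the individual summands, and no argument is given for why the single-polygon estimate survives this mixing. So the proposal assumes essentially what must be proved.

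Two further steps would fail as written. The claim that hollow $3$-polytopes of lattice width at least $2$ form finitely many unimodular classes is false: for every $N$ the prism $2\Delta_2\times[0,N]$ over the doubled unimodular triangle is hollow of width $2$, so the width-$\geq 2$ case in dimension $3$ cannot be finished by inspecting a finite list; only the \emph{maximal} hollow $3$-polytopes are finitely many (cf.~\cite{AverkovWagnerWeismantel}), and reducing the general case to them, or to polytopes projecting onto hollow polygons, is itself substantial work your sketch does not supply. Likewise, the Cayley-decomposition thresholds of Batyrev--Nill and Haase--Nill--Payne are not ``small'' in any off-the-shelf form for degree $2$, and the dimensions below whatever threshold one uses do not contribute ``a few base cases'' such as $3\Delta_3$ or $2\Delta_4$ but \emph{all} degree-$2$ polytopes in those dimensions --- which is the original problem again; indeed, the statement that a degree-$2$ polytope of dimension $\geq 3$ is, apart from the exceptional configuration, of lattice width $1$ is precisely Treutlein's own structure theorem, so invoking such a decomposition without proof is close to circular here. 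In short, the skeleton matches the known route, but what you have is a plan, not a proof: the core inequality, the dimension-$3$ analysis, and the verification that only finitely many genuinely new base cases occur are all missing.
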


In dimension $2$, the inequalities given in Theorem~\ref{thm:treutlein} together with the constraint $h_2^\ast\leq h_1^\ast$ completely classify all $h^\ast$-polynomials of degree $2$. In dimension $3$, Henk and Tagami~\cite{henktagami} showed that the conditions given in Theorem~\ref{thm:treutlein} are also sufficient and thereby characterized all Ehrhart polynomials of $3$-dimensional degree-$2$ polytopes.

The goal of this article is to derive classification results analogous to Theorems~\ref{thm:scott} and~\ref{thm:treutlein}
for the Ehrhart polynomials of degree-$2$ zonotopes. 

A \textbf{zonotope} is defined as the Minkowski sum of finitely many line segments. Given
vectors $\mathbf{v}_1,\ldots, \mathbf{v}_m \in \mathbb{R}^d$, the \textbf{zonotope} generated by $\mathbf{v}_1,\ldots, \mathbf{v}_m$ is defined by
\[
Z(\mathbf{v}_1,\ldots, \mathbf{v}_m) \ = \ \left\{\sum _{i=1}^m \lambda_i \mathbf{v}_i \colon 0\leq \lambda _i \leq 1 \text{ for all } i \right\} \, .
\]
Up to translation, every zonotope is of this form. A zonotope is called a \textbf{lattice zonotope} if it is a lattice polytope. This is the case if and only if it is a translate of a zonotope generated by vectors in $\mathbb{Z}^d$. Zonotopes form a fundamental class of polytopes, and $2$-dimensional zonotopes are precisely
the centrally symmetric polygons (see, e.g., \cite[Example 7.14]{ziegler}). The following expression for the Ehrhart polynomial of lattice zonotopes was given by
Stanley, following from a canonical subdivision of a zonotope into parallelepipeds~\cite{shephardzonotopes}.

\begin{theorem}[Stanley~\cite{stanleyzonotopegraphicaldegree}]\label{thm:Stanley}
Let $\mathbf{v}_1,\ldots, \mathbf{v}_m$ be integer vectors and let $Z := Z(\mathbf{v}_1,\ldots, \mathbf{v}_m)$. Then
\[
\ehr _Z (n) \ = \ \sum _I g(I)n^{|I|}
\]
where $I$ ranges over all linearly independent subsets $\{ \mathbf{v}_1,\ldots, \mathbf{v}_m
\} $, and $g(I)$ denotes the greatest common divisor of all minors of size $|I|$ of the
matrix with vectors indexed by elements of $I$ as columns, where $g(\emptyset):=1$.
\end{theorem}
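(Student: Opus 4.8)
The plan is to count the lattice points of $nZ$ by slicing $nZ$ into half-open parallelepipeds, following the canonical zonotopal subdivision of Shephard~\cite{shephardzonotopes}. For a linearly independent subset $I\subseteq\{1,\dots,m\}$ put $\Pi_I:=\{\sum_{i\in I}\lambda_i\bv_i : 0\le\lambda_i\le1\}$. Shephard's theorem subdivides $Z$ into parallelepipeds, each a translate of some $\Pi_I$; refining this to a half-open decomposition (dictated by a generic direction $\bc\in\RR^d$) produces a \emph{disjoint} union
\[
Z \ = \ \bigsqcup_{I} \bigl( \mathbf{t}_I + \Pi_I^{\mathrm{ho}} \bigr) ,
\]
where $I$ ranges over \emph{all} linearly independent subsets of $\{1,\dots,m\}$ (including $\emptyset$), the set $\Pi_I^{\mathrm{ho}}$ is $\Pi_I$ with each generating segment $[\bzero,\bv_i]$ made half-open according to $\bc$, and the translation vector $\mathbf{t}_I$ is a sum of some subset of $\{\bv_1,\dots,\bv_m\}$ — in particular $\mathbf{t}_I\in\ZZ^d$, so $\mathbf{t}_I+\Pi_I^{\mathrm{ho}}$ and $\Pi_I^{\mathrm{ho}}$ contain the same number of lattice points.

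Next I would show that for linearly independent integer vectors $\bv_i$ ($i\in I$) the half-open parallelepiped $\Pi_I^{\mathrm{ho}}$ contains exactly $g(I)$ points of $\ZZ^d$, regardless of which facets are included. Indeed, after replacing some $\bv_i$ by $-\bv_i$ and translating, $\Pi_I^{\mathrm{ho}}$ becomes a standard fundamental domain of the sublattice $\Lambda'_I:=\sum_{i\in I}\ZZ\bv_i$ inside $\Lambda_I := \ZZ^d\cap\vspan\{\bv_i : i\in I\}$, hence contains $[\Lambda_I:\Lambda'_I]$ lattice points; these sign changes affect no minor up to sign. Expressing the $\bv_i$ in terms of a $\ZZ$-basis of $\Lambda_I$ via an integer matrix $A$ (so that $|\det A| = [\Lambda_I:\Lambda'_I]$) and using Cauchy--Binet, every maximal minor of the matrix with columns $\bv_i$ ($i\in I$) equals $\det A$ times the corresponding maximal minor of the basis matrix; since $\Lambda_I$ is saturated, those minors have gcd $1$, whence $g(I)=|\det A|=|\Pi_I^{\mathrm{ho}}\cap\ZZ^d|$.

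Finally, because $nZ=Z(n\bv_1,\dots,n\bv_m)$, the two steps above applied to the scaled generators give $\ehr_Z(n)=|nZ\cap\ZZ^d|=\sum_I g(\{n\bv_i : i\in I\})$, the sum over linearly independent $I$. Each maximal minor of the matrix with columns $n\bv_i$ ($i\in I$) is $n^{|I|}$ times the corresponding minor for the $\bv_i$, so $g(\{n\bv_i:i\in I\})=n^{|I|}g(I)$; the empty set contributes its single lattice point, $g(\emptyset)=1=n^0$. Summing over all $I$ yields $\ehr_Z(n)=\sum_I g(I)\,n^{|I|}$, which incidentally re-proves that this expression is polynomial in $n$.

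\emph{The main obstacle.} The only nonroutine ingredient is the first step: upgrading Shephard's subdivision to a genuinely disjoint half-open decomposition with \emph{integer} translation vectors. One option is to quote the subdivision and carry out the half-open bookkeeping explicitly relative to a generic functional; another is an induction on $m$, sweeping $Z(\bv_1,\dots,\bv_{m-1})$ along the segment $[\bzero,\bv_m]$ and checking that the newly swept region is a half-open prism over the bottom boundary of $Z(\bv_1,\dots,\bv_{m-1})$, which by the inductive hypothesis decomposes into half-open parallelepipeds indexed precisely by the independent sets containing $m$. The remaining steps are then elementary linear algebra over $\ZZ$.
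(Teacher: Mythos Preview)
The paper does not give its own proof of this theorem: it is quoted as a result of Stanley, with only the one-line remark that it ``follow[s] from a canonical subdivision of a zonotope into parallelepipeds~\cite{shephardzonotopes}.'' Your proposal is exactly an elaboration of that remark---Shephard's half-open zonotopal decomposition, the index computation $|\Pi_I^{\mathrm{ho}}\cap\ZZ^d|=[\Lambda_I:\Lambda'_I]=g(I)$ via Cauchy--Binet, and the homogeneity $g(\{n\bv_i\})=n^{|I|}g(I)$---and it is the standard argument; there is nothing to compare against beyond that hint, and your outline matches it.
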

In particular, it follows that Ehrhart polynomials of lattice zonotopes have only nonnegative integer coefficients. Further, their Ehrhart polynomials are constrained to the
following form. 

\begin{theorem}[Beck--Jochemko--McCullough~\cite{beulerian}]\label{cor:Ehrhartzonotope}
For all $d$-dimensional lattice zonotopes $P$ there exist (uniquely determined) nonnegative integers $c_1,\ldots, c_d$ such that
\[
\ehr _P (n) \ = \ (n+1)^d + c_1 (n+1)^{d-1} \, n + c_2 (n+1)^{d-2} \, n^2+\dots +c_d \, n^d \, .
\]
\end{theorem}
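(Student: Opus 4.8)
First, the existence, uniqueness, and integrality of the $c_i$ are immediate, so the whole content is nonnegativity. Indeed, since $(n+1)^{d-k}n^k=\sum_{j\ge k}\binom{d-k}{j-k}n^j$, the polynomials $(n+1)^{d-k}n^k$ for $k=0,\dots,d$ are obtained from $1,n,\dots,n^d$ by a unitriangular integer change of basis; hence they form a basis of the polynomials of degree $\le d$, the transition matrices lie in $\mathrm{GL}_{d+1}(\ZZ)$, and—because $\ehr_P$ has integer coefficients by Theorem~\ref{thm:Stanley}—the $c_i$ are integers. As each $(n+1)^{d-k}n^k$ with $k\ge1$ vanishes at $n=0$, we also get $c_0=\ehr_P(0)=1$. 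It therefore remains to show $c_i\ge0$, which I would do by exhibiting the $c_i$ as explicit nonnegative counts.

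Passing from $\RR^d$ to $\aff(P)$, I may assume $P=Z(\bv_1,\dots,\bv_m)$ is full-dimensional, and I would start from Stanley's formula $\ehr_P(n)=\sum_I g(I)\,n^{|I|}$ (Theorem~\ref{thm:Stanley}), the sum over linearly independent $I\subseteq[m]$. Fixing a shelling of the independence complex of the matroid of $\bv_1,\dots,\bv_m$ (matroid independence complexes are shellable, a classical fact), with restriction map $B\mapsto\mathcal R(B)\subseteq B$ on the bases $B$, the independent sets partition into Boolean intervals $\{\,I\text{ independent}\,\}=\bigsqcup_B[\mathcal R(B),B]$, and this regroups Stanley's sum as
\[
\ehr_P(n) \ = \ \sum_B E_B(n), \qquad E_B(n) \ := \ \sum_{\mathcal R(B)\subseteq I\subseteq B} g(I)\,n^{|I|} .
\]

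The heart of the argument is the identity $E_B(n)=\ehr_{\Pi_B}(n)$, where $\Pi_B$ is the half-open parallelepiped spanned by $\{\bv_i:i\in B\}$ that is closed on the facets in the ``active'' directions $A_B:=B\setminus\mathcal R(B)$ and half-open (exactly one facet removed) on the facets in the directions $\mathcal R(B)$. I would prove it by decomposing both sides over subsets $T\subseteq B$. On one side: a half-open fundamental parallelepiped on $\{\bv_i:i\in T\}$ contains $g(T)$ lattice points, and its $n$-th dilate splits into relatively open sub-parallelepipeds, which gives $g(T)\,n^{|T|}=\sum_{S\subseteq T}\ehr^{\circ}_S(n)$ (here $\ehr^{\circ}_S$ is the Ehrhart function of the relatively open parallelepiped on $\{\bv_i:i\in S\}$); substituting this into $E_B$ collapses the interval sum to $E_B(n)=\sum_{T\subseteq B}2^{|A_B\setminus T|}\,\ehr^{\circ}_T(n)$. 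On the other side: the decomposition of $\Pi_B$ into relatively open faces has exactly $2^{|A_B\setminus T|}$ faces of each ``type'' $T$, and each such face is a \emph{lattice} translate of the relatively open parallelepiped on $\{\bv_i:i\in T\}$, so $\ehr_{\Pi_B}(n)=\sum_{T\subseteq B}2^{|A_B\setminus T|}\,\ehr^{\circ}_T(n)$ as well.

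Finally I would compute $\ehr_{\Pi_B}$ directly. In $\lambda$-coordinates, $\Pi_B$ equals $\prod_{i\in A_B}[0,1]\times\prod_{i\in\mathcal R(B)}[0,1)$ with respect to the lattice $\Lambda_B\supseteq\ZZ^B$ of index $g(B)$; writing $V_B$ for the set of coset representatives of $\ZZ^B$ in $\Lambda_B$ lying in $[0,1)^B$ and $z_B(v):=\#\{i\in A_B:v_i=0\}$, a coordinate-by-coordinate count of $\Lambda_B$ inside the dilated box yields $\ehr_{\Pi_B}(n)=\sum_{v\in V_B}(n+1)^{z_B(v)}\,n^{\,d-z_B(v)}$. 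Summing over all bases then gives $\ehr_P(n)=\sum_{k=0}^d c_k\,(n+1)^{d-k}n^k$ with $c_k=\#\{(B,v):v\in V_B,\ z_B(v)=d-k\}\in\ZZ_{\ge0}$, as claimed. The step demanding the most care is the identity of the previous paragraph—in particular verifying that the relatively open faces of $\Pi_B$ are lattice, not merely affine, translates of lower-dimensional parallelepipeds, so that Ehrhart additivity may be applied face by face—together with the appeal to shellability of the matroid complex; the two parallelepiped point counts themselves are routine.
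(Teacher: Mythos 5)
Your argument is correct, but note that the paper itself gives no proof of this statement: it is imported verbatim from \cite{beulerian} (and is subsumed by Theorem~\ref{thm:hstarzonotope} quoted later). Compared with the proof in that source, your route is genuinely different in how the reduction to parallelepipeds is organized. Beck--Jochemko--McCullough decompose the zonotope itself into half-open lattice parallelepipeds (a half-open refinement of the Shephard/Stanley paving, obtained by a generic sweep) and then count lattice points box by box; the box count is exactly your final computation, $\sum_{v}(n+1)^{z(v)}n^{d-z(v)}$ over coset representatives, and it is this geometric decomposition that also yields the finer $h^\ast$-statement of Theorem~\ref{thm:hstarzonotope}. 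You never subdivide $Z$: you take Stanley's formula (Theorem~\ref{thm:Stanley}), regroup the independent sets into the Boolean intervals $[\mathcal{R}(B),B]$ coming from a shelling of the matroid independence complex, and then verify purely combinatorially that each interval sum $E_B$ coincides with the lattice-point enumerator of an abstract half-open parallelepiped $\Pi_B$, by expanding both sides over relatively open faces (both equal $\sum_{T\subseteq B}2^{|A_B\setminus T|}\ehr^{\circ}_T(n)$, using that $g(T)$ counts lattice points in the half-open box on $T$ and that the faces in question are lattice translates). This buys you freedom from any genericity/sweep argument, at the cost of invoking shellability of matroid complexes and the interval-partition property; the geometric decomposition buys an actual subdivision of $Z$ and the refined Eulerian $h^\ast$-formula, which your bookkeeping does not immediately recover. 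Two harmless points to tidy: a lattice zonotope is a \emph{lattice} translate of some $Z(\bv_1,\ldots,\bv_m)$ with integer generators, so assuming $P=Z(\bv_1,\ldots,\bv_m)$ loses nothing; and the normalization $c_0=1$, which you get from $\ehr_P(0)=1$, is also visible in your final count, since exactly one basis in a shelling has empty restriction set.
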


We again observe that the set of polynomials $\{(n+1)^d, (n+1)^{d-1}n,\ldots, n^d\}$ forms a basis for the vector space of polynomials of degree at most $d$. A useful feature of this basis is that the number of interior points of the polytope can be read off easily: by Ehrhart-Macdonald reciprocity (see, e.g.,~\cite{ccd}), this number is equal to $|\ehr_P(-1)|$, which equals the coefficient $c_d$ in this basis. In particular, $c_d=h^\ast _d$.

As we shall prove, the coefficients of Ehrhart polynomials of lattice zonotopes of degree $2$ with respect to this basis can be classified in a particularly simple form. Our main results are as follows.

\begin{theorem}\label{thm:2imzono}
The polynomial
\[
(1+n)^2+ c_1(n+1)n+c_2\, n^2
\]
is the Ehrhart polynomial of a $2$-dimensional lattice zonotope if and only if
\begin{enumerate}[{\rm (i)}]
\item $c_1,c_2\in \mathbb{Z}_{\geq 0}$ and
\item $c_2\geq c_1 -1$ or $c_2=0$.
\end{enumerate}
\end{theorem}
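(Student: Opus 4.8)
The plan is to characterize $2$-dimensional lattice zonotopes directly via their generating vectors, using Theorem~\ref{thm:Stanley}. Since a $2$-dimensional lattice zonotope is, up to translation and up to a unimodular transformation, $Z(\bv_1,\ldots,\bv_m)$ with $\bv_i\in\ZZ^2$ spanning $\RR^2$, Stanley's formula gives $\ehr_Z(n) = 1 + \left(\sum_i g(\{\bv_i\})\right) n + \left(g(\{\bv_1,\ldots,\bv_m\})\right) n^2$, where the coefficient of $n$ is $\sum_i \gcd(\bv_i)$ (the sum of lattice lengths of the generating segments, counted with the $\lambda_i\in[0,1]$ normalization) and the coefficient of $n^2$ is $\gcd$ of all $2\times 2$ minors $\det(\bv_i,\bv_j)$. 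Converting to the basis of Theorem~\ref{cor:Ehrhartzonotope} via $\ehr_Z(n) = (n+1)^2 + c_1(n+1)n + c_2 n^2$, one reads off $c_2 = \ehr_Z(0) - \ldots$; more directly, comparing $1 + e_1 n + e_2 n^2$ with $1 + (2+c_1) n + (1 + c_1 + c_2) n^2$ (where I write $(n+1)^2 + c_1(n+1)n + c_2 n^2 = 1 + (2+c_1)n + (1+c_1+c_2)n^2$), we get $c_1 = e_1 - 2$ and $c_2 = e_2 - e_1 + 1$. So the task reduces to: determine which pairs $(e_1, e_2)$ with $e_1 = \sum_i \gcd(\bv_i)$ and $e_2 = \gcd_{i<j}\det(\bv_i,\bv_j)$ are realizable, then translate the answer.

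For the \emph{necessity} direction, I would argue that $c_1, c_2 \in \ZZ_{\geq 0}$ is immediate from Theorem~\ref{cor:Ehrhartzonotope}. For condition (ii), I would observe that $c_2 = 0$ exactly when the normalized area $e_2 = e_1 - 1$, i.e., when $Z$ has no interior lattice points (this is the degree $\leq 1$ case, and by Scott's Theorem~\ref{thm:scott}(i) this is precisely the lattice polytopes with $e_2 = e_1 - 1$). When $c_2 \geq 1$, I need $c_2 \geq c_1 - 1$, equivalently $e_2 - e_1 + 1 \geq e_1 - 3$, i.e., $e_2 \geq 2e_1 - 4$; but actually the cleaner route is to invoke Scott's theorem directly: a $2$-dimensional lattice zonotope is a centrally symmetric lattice polygon, and among Scott's cases, case (iii) — the triangle $\conv\{(0,0),(3,0),(0,3)\}$ — is not centrally symmetric, so only (i) and (ii) survive. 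Case (i) gives $c_2 = 0$. In case (ii), $e_1 < e_2 + 1$ translates to $c_1 - 1 < c_2$... wait, $e_1 < e_2 + 1$ gives $e_1 - e_2 < 1$, i.e. $e_1 \le e_2$ (integers or half-integers — but zonotope coefficients are integers), hence $c_1 = e_1 - 2 \le e_2 - 2 < e_2 - e_1 + 1 + (\text{something})$; I should check this carefully but the upshot is that case (ii) of Scott combined with integrality forces exactly $c_2 \geq c_1 - 1$. The subtlety is making sure Scott's case (ii) upper bound $e_1 \leq \frac12 e_2 + 2$ is automatically satisfied by zonotopes or is consistent — I expect it either always holds for centrally symmetric polygons or carves out nothing extra once $c_2 \geq c_1-1$ and $c_1, c_2 \geq 0$; verifying this alignment of the two parametrizations is the first place care is needed.

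For the \emph{sufficiency} direction, given $c_1, c_2 \in \ZZ_{\geq 0}$ with $c_2 \geq c_1 - 1$ or $c_2 = 0$, I would construct an explicit lattice zonotope. The $c_2 = 0$ case: take a degenerate-in-area construction, e.g., $Z((c_1+1, 0), (0,1)) = $ a rectangle $[0, c_1+1] \times [0,1]$, wait that has area $c_1 + 1$ and $e_1 = (c_1+1) + 1 = c_1 + 2$, $e_2 = c_1 + 1$, so indeed $c_2 = e_2 - e_1 + 1 = 0$. Good. For $c_2 \geq 1$: I want $e_1 = c_1 + 2$ and $e_2 = c_1 + c_2 + 1$. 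A natural family is $Z((a,0),(0,1),(1,1),\ldots)$ — segments that contribute lattice length to $e_1$ and whose pairwise determinants have the right gcd. Concretely, $Z((1,0),(0,1))$ is the unit square with $(e_1,e_2)=(2,1)$, i.e. $(c_1,c_2)=(0,0)$; adding the segment $(1,1)$ gives a hexagon; scaling and adding segments $(k,0)$ or $(0,k)$ bumps $e_1$. The key flexibility: I can take $Z\big((1,0),(0,1),(p,0)\big)$ type sums to raise $e_1$ while the determinant gcd stays $1$, giving the diagonal-ish region of the $(c_1, c_2)$ plane, and dilating by taking $Z(q\bv_1, q\bv_2)$ scales both $e_1$ by $q$ and $e_2$ by $q^2$. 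I anticipate the main obstacle is assembling a single clean parametrized family (or a small finite list of families) covering every admissible $(c_1, c_2)$ with $c_2 \geq c_1 - 1 \geq 0$: one wants $\gcd$ of the $2\times 2$ minors to hit an arbitrary prescribed value $c_2 + c_1 + 1$ while $\sum \gcd(\bv_i)$ hits $c_1 + 2$, and balancing these two (since adding a generator changes both) is the combinatorial crux. A workable choice is segments $\bv_1 = (1,0)$, $\bv_2 = (0, c_1+c_2+1)$ — wait, better: $\bv_1 = (1,1)$, $\bv_2=(1,-1)$ has $\det = -2$; more systematically I would use $\bv_1 = (a, 0)$, $\bv_2 = (b, c)$ with a third short generator to fine-tune, then verify via Theorem~\ref{thm:Stanley} that the coefficients land exactly on the prescribed $(c_1, c_2)$, which reduces to elementary number theory (solvability of $\gcd$ equations). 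Once the two-dimensional construction menu is in hand, sufficiency follows by direct computation with Stanley's formula.
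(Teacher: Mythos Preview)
Your strategy matches the paper's: necessity via Scott's theorem (the triangle case~(iii) is excluded because zonotopes are centrally symmetric), sufficiency by explicit construction. Two concrete things need fixing before it goes through.

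First, you misread Theorem~\ref{thm:Stanley}. The coefficient of $n^2$ is the \emph{sum} $\sum_{i<j}|\det(\bv_i,\bv_j)|$ over linearly independent pairs (which equals the area of $Z$), not the gcd of the $2\times 2$ minors. This is harmless for necessity, since you route through Scott anyway, but it derails your sufficiency search: you speak of making ``the gcd of the $2\times 2$ minors hit an arbitrary prescribed value'', which is not the quantity you need to control.

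Second, in the necessity direction you chase the wrong inequality from Scott's case~(ii). Under your dictionary $e_1 = c_1 + 2$, $e_2 = c_1 + c_2 + 1$, the condition $e_1 < e_2 + 1$ says only $c_2 > 0$, which is the hypothesis that we are not in case~(i), not the conclusion. The inequality that actually delivers the result is the upper bound $e_1 \le \tfrac12 e_2 + 2$: substituting gives $2 + c_1 \le \tfrac12(c_1+c_2+1) + 2$, hence $c_2 \ge c_1 - 1$ on the nose. No integrality subtleties arise; that is the entire necessity argument.

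For sufficiency with $c_2 \ge c_1 - 1$, your rectangle $Z((c_1+1,0),(0,1))$ handles $c_2 = 0$, and for the rest a single three-segment family does the job: $Z\big((1,0),\,(0,c_1),\,(1,\,1-c_1+c_2)\big)$. With the corrected Stanley formula one reads off $e_1 = 1 + c_1 + 1 = c_1+2$ and $e_2 = c_1 + (1-c_1+c_2) + c_1 = c_1+c_2+1$, as required.
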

From Theorem~\ref{thm:2imzono} we furthermore obtain in Corollary~\ref{cor:scottzonotopes} that the coefficient vectors of $2$-dimensional lattice zonotopes correspond exactly to the integer solutions of the constraints given in Theorem~\ref{thm:scott}.
 
\begin{theorem}\label{thm:degree2zono}
Any lattice zonotope of degree $2$ has dimension $\le 3$.
The polynomial
\[
(1+n)^3+ c_1(n+1)^2n+c_2(n+1)n^2+c_3\, n^3
\]
is the Ehrhart polynomial of a $3$-dimensional lattice zonotope of degree $2$ if and only if
\begin{enumerate}[{\rm (i)}]
\item $c_1,c_2\in \mathbb{Z}_{\geq 0}$ and $c_3=0$ and
\item $c_2\geq c_1 -1$ or $c_2=0$.
\end{enumerate}
\end{theorem}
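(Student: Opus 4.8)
The plan is to split the statement into three parts: (a) every lattice zonotope of degree $2$ has dimension $\le 3$; (b) the stated conditions are necessary for a $3$-dimensional degree-$2$ zonotope; (c) the stated conditions are sufficient, which we carry out by explicit construction. For part (a), I would combine Theorem~\ref{thm:Stanley} with a lattice-width argument: if $Z=Z(\bv_1,\dots,\bv_m)$ has dimension $d$ and degree $2$, then the complementary dilate $(d-2)Z$ has no interior lattice points, so $Z$ has lattice width $\le d-2$ in some direction (since an empty $k$-th dilate forces small lattice width — this is essentially Treutlein's mechanism, or can be extracted from the fact that a polytope whose $(d-2)$-dilate has no interior point cannot be "thick" in every direction). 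On the other hand, the coefficient $c_d=h^\ast_d$ of $n^d$ equals $g(\{\bv_1,\dots,\bv_d\text{-independent subsets of full size}\})$ summed appropriately, and for a full-dimensional zonotope this leading term is a positive integer times $n^d$; degree $2$ forces $d\le ?$. The cleaner route: a zonotope of lattice width $w$ in direction $\bu$ decomposes, via the projection along $\bu$, and one shows inductively that $\deg Z \ge d - w$ is impossible to beat, while a centrally symmetric polytope of dimension $d$ has lattice width $\ge 1$ in every direction unless it is lower-dimensional; pushing this gives $d - \deg Z \le 1$ in the width-minimizing direction, hence $\deg Z \ge d-1$ unless $d\le 3$. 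I expect this dimension bound to be the main obstacle, and I would look to control it through solid angles (as the abstract promises): the solid-angle sum $\sum_{\bx} \text{(solid angle of $Z$ at $\bx$)}$ over lattice points equals $\vol(Z)$, and for a zonotope of degree $2$ in dimension $d$ this identity, combined with the parallelepiped subdivision, forces $d\le 3$.

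For necessity in part (b): by Theorem~\ref{cor:Ehrhartzonotope} we already know $c_1,c_2,c_3\in\ZZ_{\ge0}$; the condition $\deg Z=2$ is equivalent to $h^\ast_3=0$, and since $h^\ast_3 = c_3$ by the remark preceding the theorem, we get $c_3=0$ for free. It remains to show $c_2\ge c_1-1$ or $c_2=0$. Here I would pass to the $h^\ast$-vector: converting the basis $\{(n+1)^3,(n+1)^2n,(n+1)n^2,n^3\}$ into the $h^\ast$-basis is a fixed linear change of coordinates, so $(h^\ast_1,h^\ast_2)$ is an explicit affine-linear function of $(c_1,c_2)$ (with $h^\ast_3=c_3=0$). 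The inequality $c_2\ge c_1-1$ should translate into $h^\ast_2\le h^\ast_1$ (the general constraint $h^\ast_d\le\cdots$, here simply that the second-to-last coefficient dominates, which holds for all lattice polytopes), while $c_2=0$ corresponds to the $h^\ast_2=0$ boundary. Then Treutlein's Theorem~\ref{thm:treutlein} classification of degree-$2$ $h^\ast$-vectors, intersected with the zonotope constraint (from Theorem~\ref{thm:Stanley}, $h^\ast_2$ = a gcd of $2\times2$ minors, hence the exceptional case $h^\ast_1=7,h^\ast_2=1$ must be ruled out or shown to correspond to a forbidden $(c_1,c_2)$), finishes necessity.

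For sufficiency in part (c): I would exhibit, for each admissible pair $(c_1,c_2)$, an explicit $3$-dimensional lattice zonotope with the prescribed Ehrhart polynomial, using Theorem~\ref{thm:Stanley} to compute $\ehr_Z$ directly from the generating vectors. The natural building blocks are segments in coordinate directions and their scalings: the cube $Z(\be_1,\be_2,\be_3)$ gives $(n+1)^3$; adding a segment like $Z(\be_1,\be_2,\be_3,(a,b,0))$ contributes extra $n$ and $n^2$ terms whose coefficients are gcds of minors, so by choosing a few extra generators lying in the $z=0$ plane (keeping degree $2$, since the new independent triples all have a zero minor or are controlled) we can tune $c_1$ and $c_2$ over the region $c_2\ge c_1-1$; the case $c_2=0$ is realized by prisms $[0,a]\times[0,b]\times[0,1]$-type zonotopes or more generally $2$-dimensional-degree constructions crossed with a segment, realizing exactly $c_3=c_2=0$ with $c_1$ arbitrary. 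The technical heart here is the bookkeeping of which linearly independent subsets contribute and verifying that the degree stays $\le 2$ throughout — i.e., that no configuration we use accidentally has $c_3>0$ — which follows by keeping all but $\le 2$ "directions worth" of generators inside a fixed plane. Finally, Theorem~\ref{thm:2imzono} (already proved) can be invoked as a black box for the planar factor in the $c_2=0$ constructions, and to double-check consistency of the $3$-dimensional formulas under the substitution that collapses one dimension.
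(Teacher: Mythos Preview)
Your proposal has a genuine gap in part~(b), and part~(a) misses a much simpler argument.

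\medskip

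\textbf{On the dimension bound (a).} The lattice-width and solid-angle mechanisms you sketch are unnecessary and, as stated, do not work. The paper's argument (given at the start of Section~\ref{sec:dim3}) is elementary: any $d$-dimensional lattice zonotope has a subdivision into lattice parallelepipeds, and the second dilate of any lattice parallelepiped contains an interior lattice point (namely the sum of its generators). Hence $\deg Z \ge d-1$ for every $d$-dimensional lattice zonotope, so $\deg Z = 2$ forces $d\le 3$. Your inequality ``$\deg Z \ge d-w$'' is not true in general, and the solid-angle identity $A(Z)=\vol(Z)$ by itself does not produce a dimension bound.

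\medskip

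\textbf{On necessity (b).} This is the real problem. Treutlein's Theorem~\ref{thm:treutlein} is \emph{strictly too weak} to yield $c_2\ge c_1-1$ when $c_2>0$. Using the change of basis from Theorem~\ref{thm:hstarzonotope} in dimension $3$ one has
\[
(h_1^\ast,h_2^\ast) \;=\; (4+4c_1+2c_2,\;1+2c_1+4c_2),
\]
so $c_2\ge c_1-1$ is equivalent to $h_2^\ast \ge h_1^\ast - 5$, \emph{not} to $h_2^\ast\le h_1^\ast$ as you guess (and the latter is false anyway: the exceptional parallelepiped in Theorem~\ref{thm:degree2zonoclass} has $(h_1^\ast,h_2^\ast)=(10,13)$). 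Treutlein's constraint $h_1^\ast\le 3h_2^\ast+3$ only gives $h_2^\ast\ge (h_1^\ast-3)/3$, which for large $h_1^\ast$ is far weaker than $h_2^\ast\ge h_1^\ast-5$. Concretely, $(c_1,c_2)=(3,1)$ gives $(h_1^\ast,h_2^\ast)=(18,11)$, which satisfies Treutlein's inequality ($18\le 36$) but violates $c_2\ge c_1-1$. So no amount of bookkeeping with Treutlein and Hibi-type inequalities will close this gap: the necessity direction genuinely requires the geometric classification of Theorem~\ref{thm:degree2zonoclass}, which the paper proves via solid angles and lattice width (Lemmas~\ref{2dimintpoint}--\ref{lem:width2}) and then simply reads off the Ehrhart polynomials of the two resulting families.

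\medskip

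\textbf{On sufficiency (c).} Your plan here is essentially correct and matches the paper in spirit, though the paper's execution is cleaner: once Theorem~\ref{thm:degree2zonoclass} is in hand, every admissible $(c_1,c_2)$ is realized either by a prism $Q\times[0,1]$ with $Q$ coming from Theorem~\ref{thm:2imzono} (giving $\ehr_Q(n)\cdot(n+1)$, hence the same $(c_1,c_2)$ and $c_3=0$), or by the exceptional parallelepiped, whose Ehrhart polynomial $(n+1)^3+3(n+1)n^2$ already fits the pattern.
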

In particular, we observe that the classification results for Ehrhart polynomials of lattice zonotopes of degree $2$ given in Theorems~\ref{thm:2imzono} and~\ref{thm:degree2zono} are identical, up to a projection of the latter onto the first three coefficients. 

Our proof of Theorem~\ref{thm:degree2zono} is constructive:  we provide a characterization of the actual $3$-dimensional zonotopes of degree~$2$.
\begin{theorem}\label{thm:degree2zonoclass}
A $3$-dimensional lattice zonotope $P$ has degree $2$ if and only if either
\begin{enumerate}[{\rm (i)}]
\item $P \cong Q \times [0,1]$ for some $2$-dimensional lattice zonotope $Q$, or
\item $P\cong Z(\mathbf{v}_1,\mathbf{v}_2,\mathbf{v}_3)$ where
\[
\mathbf{v}_1=[1,1,0] \, , \ \mathbf{v}_2 = [-1,1,0] \, , \ \mathbf{v}_3 = [1,1,2] \, .
\]
\end{enumerate}
\end{theorem}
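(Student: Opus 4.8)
\textbf{Proof proposal for Theorem~\ref{thm:degree2zonoclass}.}

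The plan is to work with a $3$-dimensional lattice zonotope $P = Z(\mathbf{v}_1,\ldots,\mathbf{v}_m)$ and translate the degree-$2$ condition into a statement about lattice width (equivalently, about interior points of $P$). By the remark following the definition of degree, $\deg P \le 2$ means precisely that $P$ has no interior lattice points, i.e.\ $c_3 = h^\ast_3 = 0$; combined with $\dim P = 3$ this forces $\deg P = 2$ exactly. So the task is to classify all $3$-dimensional lattice zonotopes with no interior lattice point. The natural tool here is \emph{lattice width}: a hollow lattice polytope has bounded width in some direction, and for zonotopes width in a direction $\mathbf{u}$ is simply $\sum_i |\langle \mathbf{u}, \mathbf{v}_i\rangle|$. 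I would first argue that a hollow $3$-dimensional lattice zonotope has lattice width $1$ in some primitive direction $\mathbf{u}$, or else width $2$ with a very restricted shape. Width $1$ in direction $\mathbf{u}$ means $P$ lies between two consecutive lattice hyperplanes $\langle \mathbf{u},\cdot\rangle = k$ and $= k+1$; since $P$ is a zonotope, this is equivalent to splitting the generators into those with $\langle \mathbf{u}, \mathbf{v}_i\rangle = 0$ and exactly one generator (up to the usual moves) with $\langle \mathbf{u},\mathbf{v}_i\rangle = \pm 1$, after which an $\SL_3(\ZZ)$ change of coordinates puts $P$ in the form $Q \times [0,1]$ with $Q$ a $2$-dimensional lattice zonotope — this is case (i).

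The remaining and more delicate case is when $P$ is hollow but has lattice width $\ge 2$ in every direction. Here I expect to invoke (or reprove in the zonotope setting) the fact that hollow lattice polytopes of width $\ge 2$ are extremely rare in low dimension: in dimension $3$ they are classified, and the only ones are finitely many specific polytopes of width exactly $2$. Intersecting that classification with the class of zonotopes should leave precisely one zonotope up to unimodular equivalence, and a direct computation identifies it with $Z(\mathbf{v}_1,\mathbf{v}_2,\mathbf{v}_3)$ for $\mathbf{v}_1 = [1,1,0]$, $\mathbf{v}_2 = [-1,1,0]$, $\mathbf{v}_3 = [1,1,2]$ — case (ii). Concretely I would use solid-angle or volume bookkeeping together with Theorem~\ref{thm:Stanley}: the degree-$2$ constraint pins down the possible multiset of $2\times 2$ minors $g(I)$ and the number of generators, and then a finite case analysis on how three (or more) primitive vectors in $\ZZ^3$ can yield a hollow sum narrows things to this single zonotope. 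One has to check that adding further generators either destroys hollowness or returns to case (i).

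For the converse direction, both listed zonotopes are easily checked to be hollow: for (i), $Q \times [0,1]$ visibly has no interior lattice point since its intersection with any hyperplane $x_3 = t$, $0 < t < 1$, contains no lattice points; for (ii) one computes $\ehr_P(n)$ directly via Theorem~\ref{thm:Stanley} (the generators are linearly independent, with determinant $4$, and the $2\times 2$ minors have the appropriate gcds) and verifies $\ehr_P(-1) = 0$, so $P$ is hollow and hence has degree $2$ as $\dim P = 3$. The main obstacle I anticipate is the width-$\ge 2$ case: rather than quoting the full classification of hollow $3$-polytopes, it is cleaner to exploit the zonotopal structure to bound the number and size of generators directly — showing that a width-$\ge 2$ hollow zonotope can have only three generators spanning $\ZZ^3$ with tightly constrained inner products — and this combinatorial reduction, while elementary, is where the real work lies. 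The symmetry of a zonotope about its center should help: hollowness of a centrally symmetric body of width $\ge 2$ is a strong condition, essentially forcing the "fat" directions to conspire, and this is what singles out the $Z(\mathbf{v}_1,\mathbf{v}_2,\mathbf{v}_3)$ above.
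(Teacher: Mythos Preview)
Your overall architecture---split on lattice width, handle width~$1$ by a product decomposition, then pin down the width~$\ge 2$ case---matches the paper's. The width-$1$ reduction to $Q\times[0,1]$ is exactly Lemma~\ref{lem:zonotopewidth1}, and your sketch of it is fine.

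The genuine gap is in the width-$\ge 2$ case. Your primary plan is to invoke a classification of hollow lattice $3$-polytopes of width $\ge 2$, asserting there are only finitely many. That assertion is false: for every $N\ge 2$ the prism $2\Delta_2 \times [0,N]$ (where $2\Delta_2=\conv\{(0,0),(2,0),(0,2)\}$) is a hollow lattice $3$-polytope of lattice width exactly~$2$, and these are pairwise inequivalent. What is true is that \emph{maximal} hollow $3$-polytopes, or hollow $3$-polytopes that do not project onto a lower-dimensional hollow polytope, are finite in number; but extracting the zonotopal ones from such a statement is not immediate, and you would still need to control the infinite families. So approach (a) as written does not go through.

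The paper instead stays entirely inside the zonotope world. The key step you are missing is Lemma~\ref{width}: a degree-$2$ lattice zonotope of width $>1$ must be a parallelepiped. This is proved via Lemma~\ref{2dimintpoint}, which says that any translate of a $2$-dimensional lattice zonotope with at least three pairwise independent generators contains an interior lattice point; slicing a putative width-$>1$ zonotope with $\ge 4$ generators along a suitable hyperplane then produces such a translate in the interior. Once one knows the zonotope is a parallelepiped, solid-angle bookkeeping on the facets (Lemma~\ref{lem:width2}) forces each facet to carry exactly one relative-interior lattice point at its center, and a separate argument in the proof of the theorem rules out non-primitive generators. Your later remarks about ``showing that a width-$\ge 2$ hollow zonotope can have only three generators'' and ``solid-angle or volume bookkeeping'' gesture in this direction, but the mechanism---the $2$-dimensional translate lemma feeding into a slicing argument---is the concrete idea you would need to supply.
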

To prove our results we consider solid angles and the lattice width for zonotopes. In
particular, Theorem~\ref{thm:degree2zonoclass} shows that, apart from one exception, all
$3$-dimensional lattice zonotopes of degree $2$ have lattice width $1$. This complements
previous results on $3$-dimensional lattice polytopes of lattice width greater than $1$
and $3$-dimensional lattice polytopes without interior points; see, e.g., \cite{AverkovWagnerWeismantel,NillZiegler,BlancoSantos}.

The outline is as follows: in Section~\ref{sec:dim2} we consider $2$-dimensional lattice
zonotopes and prove Theorem~\ref{thm:2imzono}. As a corollary we obtain that the
coefficient vectors of all $2$-dimensional lattice zonotopes are exactly given by all
integer vectors satisfying the inequalities given in Theorem~\ref{thm:scott}.
Section~\ref{sec:dim3} is dedicated to the proofs of Theorems~\ref{thm:degree2zono} and~\ref{thm:degree2zonoclass}. We conclude in Section~\ref{sec:bases} by translating our classification results in terms of the $h^\ast$-polynomial. 

Throughout we assume basic knowledge of lattice polytopes and Ehrhart polynomials, and introduce essential concepts when they appear. For further reading, see, e.g.,~\cite{ccd}.

\section{Two-dimensional lattice zonotopes}\label{sec:dim2}
In this section we prove Theorem~\ref{thm:2imzono} thereby classifying all Ehrhart polynomials of $2$-dimensional lattice zonotopes.

\begin{proof}[Proof of Theorem~\ref{thm:2imzono}]
Let $(1+n)^2+ c_1n(n+1)+c_2\,n^2$ be the Ehrhart polynomial of a $2$-dimensional lattice zonotope $P$. Then the coefficients $c_1$ and $c_2$ are nonnegative integers by Theorem~\ref{cor:Ehrhartzonotope}. In
order to prove that condition (ii) is also necessary, we assume that $c_2 > 0$, that is, $P$ contains
interior lattice points. Since
\[
(n+1)^2+c_1 (n+1)n +c_2 \, n^2 \ = \ 1+(2+c_1)n+(c_1+c_2+1)n^2 \, ,
\]
Theorem~\ref{thm:scott} (ii) gives $2+c_1 \leq \tfrac{1}{2} (c_1+c_2+1)+2$ which is equivalent to $c_2\geq c_1 -1$. (Properties (i) and (iii) in Theorem~\ref{thm:scott} do not apply here since $P$ contains interior lattice points and is not a triangle.)

On the other hand, for all pairs $(c_1,c_2)$ satisfying the conditions (i) and~(ii) we
provide a lattice zonotope with Ehrhart polynomial $(1+n)^2+ c_1n(n+1)+c_2\,n^2$; if $c_1\in \mathbb{Z}_{\geq 0}$ and $c_2=0$ then the axes-parallel parallelogram $Z((1,0),(0,c_{1}+1))$ with side lengths $1$ and $c_1 + 1$ has Ehrhart polynomial 
\[
(n+1)((c_1+1)n+1) \ = \ (n+1)^{2}+c_{1}(n+1)n\,; 
\]
if $c_1,c_2\in \mathbb{Z}_{\geq 0}$ and $c_2\geq c_1 -1$ then, using Theorem~\ref{thm:Stanley}, we can calculate that the Ehrhart polynomial of the lattice zonotope
$Z((1,0), (0,c_{1}), (1,1-c_{1}+c_{2}))$ is 
\[
(n+1)^2+c_1 (n+1)n +c_2 \, n^2. 
\]
This proves the sufficiency of the conditions (i) and (ii).
\end{proof}

\begin{corollary}\label{cor:scottzonotopes}
The set of Ehrhart polynomials of $2$-dimensional lattice zonotopes is equal to the set of polynomials described in Theorem~\ref{thm:scott} adding the condition that $(e_{1},e_{2}) \in \mathbb{Z}_{>0}\times \mathbb{Z}_{>0}$.
\end{corollary}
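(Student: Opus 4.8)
The plan is to derive Corollary~\ref{cor:scottzonotopes} directly from Theorem~\ref{thm:2imzono} by translating between the two bases $\{1,n,n^2\}$ and $\{(n+1)^2,(n+1)n,n^2\}$ and matching up the constraints. First I would record the change of basis: a polynomial $1+e_1n+e_2n^2$ equals $(1+n)^2+c_1(n+1)n+c_2n^2$ exactly when $e_1=c_1+2$ and $e_2=c_1+c_2+1$, equivalently $c_1=e_1-2$ and $c_2=e_2-e_1+1$. In particular $c_1,c_2\in\ZZ_{\geq 0}$ is the same as $e_1\geq 2$, $e_2\geq e_1-1$ together with $e_1,e_2\in\ZZ$, while the disjunct $c_2=0$ corresponds to $e_2=e_1-1$, i.e.\ condition (i) of Theorem~\ref{thm:scott} ($P$ has no interior lattice points).

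Next I would split into the two cases of Theorem~\ref{thm:2imzono}(ii). If $c_2=0$ then $(e_1,e_2)$ lies on the line $e_2=e_1-1$ with $e_1\in\ZZ_{\geq 2}$; all such points satisfy Scott's condition (i), and conversely every integer point of (i) with $e_1,e_2>0$ has $e_1\geq 2$ (since $e_2=e_1-1\geq 1$) and hence arises this way. If instead $c_2\geq \max(1,c_1-1)$, then $e_2\geq e_1-1$ with strict inequality $e_2>e_1-1$ (because $c_2\geq 1$), i.e.\ $e_1<e_2+1$, and $c_1\leq c_2+1$ rewrites as $e_1-2\leq e_2-e_1+2$, that is $e_1\leq \tfrac12 e_2+2$; combined with $e_1\geq 2\geq \tfrac32$ this is precisely Scott's condition (ii). So the Ehrhart polynomials of $2$-dimensional lattice zonotopes that have interior points are exactly the integer points satisfying Scott (ii). The remaining direction is to check that every integer point of Scott's list with $e_1,e_2>0$ is hit: points of (i) are handled above; points of (ii) give $c_1=e_1-2\geq 0$ (from $e_1\geq\tfrac32$ and integrality, so $e_1\geq 2$) and $c_2=e_2-e_1+1\geq 1>0$ with $c_2\geq c_1-1$ from the inequality $e_1\leq\tfrac12 e_2+2$; and the single point (iii), $(e_1,e_2)=(\tfrac92,\tfrac92)$, is not an integer point and so imposes no new constraint once we intersect with $\ZZ_{>0}\times\ZZ_{>0}$.

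I would then assemble these observations into a short paragraph: the set in Theorem~\ref{thm:2imzono} maps bijectively, under $(c_1,c_2)\mapsto(e_1,e_2)=(c_1+2,\,c_1+c_2+1)$, onto the integer points in the region cut out by Theorem~\ref{thm:scott}'s conditions (i)--(iii) intersected with $e_1,e_2>0$, and this bijection is exactly the identity on the underlying polynomials. No step here is a real obstacle; the only mild care needed is bookkeeping around strict versus non-strict inequalities (the role of $c_2\geq 1$ in forcing $e_1<e_2+1$) and the observation that Scott's half-integrality plus positivity already forces $e_1\geq 2$, so that the zonotope constraint $c_1\geq 0$ is automatically satisfied by every relevant integer point and no conditions are lost.
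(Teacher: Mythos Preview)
Your proposal is correct and follows essentially the same approach as the paper: both arguments use the change of basis $(c_1,c_2)\mapsto(e_1,e_2)=(c_1+2,\,c_1+c_2+1)$ and verify that the two alternatives in Theorem~\ref{thm:2imzono}(ii) correspond exactly to Scott's conditions (i) and (ii) on integer points. Your write-up is slightly more explicit in checking both inclusions separately and in noting that Scott's exceptional case (iii) is non-integral and hence vacuous here, but the mathematical content is the same.
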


\begin{proof}
Let $1+e_1n+e_2n^2$ be the Ehrhart polynomial of a $2$-dimensional lattice zonotope. Then
the pair of coefficients $(e_1,e_2)$ satisfies the conditions of Theorem~\ref{thm:scott},
and $e_1$ and $e_2$ are integers by Theorem~\ref{thm:Stanley}. To see that every such pair
of integers arises as coefficients of a lattice zonotope we observe that the function
$\mathbb{Z}_{\geq0}\times \mathbb{Z}_{\geq0} \rightarrow \mathbb{Z}_{>0}\times
\mathbb{Z}_{>0}$ given by 
\[
(c_{1},c_{2}) \ \mapsto \ (e_{1},e_{2}):=(2+c_{1},1+c_{1}+c_{2})
\]
maps the coefficients with respect to the basis $\{(n+1)^{2-j}n^j\}_{j=0}^2$ to the coefficients with respect to the standard basis. Further, $c_{2}=0$ holds if and only if $e_{2}=1+c_{1}=(2+c_{1})-1=e_{1}-1$, that is, the second alternative of condition (ii) in Theorem~\ref{thm:2imzono} and condition (i) in Theorem~\ref{thm:scott} are equivalent. By the same argument, $c_2>0$ is satisfied if and only if $e_2>e_1-1$, and 
\[
 \tfrac{3}{2}  \ \leq \ e_1 \ = \ 2+c_{1}
 \ \leq \ 2+\tfrac{1}{2}c_{1}+\tfrac{1}{2}(1+c_{2})
 \ = \ 2+\tfrac{1}{2}(1+c_{1}+c_{2})
 \ = \ 2 +\tfrac{1}{2}e_2 
\]
holds if and only if $c_1\leq 1+c_2$. Thus, the first alternative of condition (ii) in
Theorem~\ref{thm:2imzono} is equivalent to condition (ii) in Theorem~\ref{thm:scott}. This proves the claim.

\end{proof}

\section{Zonotopes of degree two}\label{sec:dim3}

By a result of Shephard~\cite{shephardzonotopes}, any lattice zonotope can be subdivided into lattice parallelepipeds
which, in turn, are easily seen to contain interior integer points in their second dilates: the second dilate of $Z(\mathbf{v}_1,\ldots, \mathbf{v}_m)$ contains the sum of all of its generators in the interior, and similarly for integer translates of these parallelepipeds. This means the
degree of a lattice zonotope of dimension $d$ is either $d-1$ or $d$. Thus, zonotopes of degree $2$
can be of dimension $2$---which was covered by the previous section---or $3$, which is the setting
of this section. Our goal is to prove Theorems~\ref{thm:degree2zono} and~\ref{thm:degree2zonoclass} thereby providing a characterization of all $3$-dimensional lattice zonotopes of degree $2$ which will then lead to a complete classification of their Ehrhart polynomials.

A useful tool for our considerations are solid angles. Given a $d$-dimensional polytope $P\subset \mathbb{R}^d$, the \textbf{solid angle} at a point $\mathbf{x}\in \mathbb{R}^d$ with respect to $P$ is defined as
\[
\alpha _P (\mathbf{x}) \ := \ \lim _{\varepsilon \rightarrow 0} \frac{\vol (B_\varepsilon (\mathbf{x})\cap P)}{\vol (B_\varepsilon (\mathbf{0}))}
\]
where $B_\varepsilon (\mathbf{x})$ denotes the Euclidean ball around $\mathbf{x}$ with radius $\varepsilon$. The \textbf{solid-angle sum} of a lattice polytope $P$ is defined as
\[
A (P) \ := \sum _{\mathbf{x}\in P\cap \mathbb{Z}^d}\alpha _P(\mathbf{x}) \, .
\] 
We will also consider solid angles of lower-dimensional lattice polytopes with respect to the induced norm and volume in their affine hull.

The following result was obtained in~\cite{gravinrobinsshiryaev} (see Theorems~1.2 and 6.1 in~\cite{gravinrobinsshiryaev} and use the fact that $\vol Z$ is the multiplicity of the
multi-tiling by translates of $Z$), more generally, for lattice polytopes that multi-tile $\RR^d$.
\begin{lemma}[{Gravin--Robins--Shiryaev~\cite{gravinrobinsshiryaev}}]\label{lem:solidangleconstant}
Let $Z$ be a $d$-dimensional lattice zonotope in $\mathbb{R}^d$. Then $A(Z+\mathbf{t})=\vol Z$ for all $\mathbf{t}\in \RR^d$. In particular, every translate of $Z$ contains a lattice point.
\end{lemma}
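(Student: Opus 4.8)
The plan is to prove the special case of the Gravin--Robins--Shiryaev theorem that we need directly, in three stages: (1) integer translates of $Z$ multi-tile $\mathbb{R}^d$ with constant multiplicity $\vol Z$; (2) the solid-angle sum of $Z+\mathbf{t}$ can be rewritten as a sum of pointwise solid angles at the origin of translates of $Z$; (3) this sum is then evaluated using the multi-tiling from (1).

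For the multi-tiling I would show that
\[
\sum_{\lambda\in\mathbb{Z}^d}\mathbf{1}_Z(\mathbf{x}+\lambda) \;=\; \vol Z \qquad \text{for every }\mathbf{x}\in\mathbb{R}^d\setminus\bigcup_{\lambda\in\mathbb{Z}^d}(\partial Z+\lambda),
\]
i.e.\ for almost every $\mathbf{x}$. By Shephard's (equivalently Stanley's) decomposition, $Z$ is a disjoint union of half-open lattice parallelepipeds indexed by the linearly independent subsets $I$ of the generators, and off a set of measure zero only the full-dimensional cells contribute. A half-open parallelepiped spanned by a lattice basis $\mathbf{w}_1,\dots,\mathbf{w}_d$ is an exact fundamental domain for the sublattice $L=\mathbb{Z}\mathbf{w}_1+\dots+\mathbb{Z}\mathbf{w}_d$; writing $\mathbb{Z}^d$ as a disjoint union of $[\mathbb{Z}^d:L]$ cosets of $L$ shows that its $\mathbb{Z}^d$-translates cover $\mathbb{R}^d$ with constant multiplicity $[\mathbb{Z}^d:L]=|\det(\mathbf{w}_1,\dots,\mathbf{w}_d)|$, which is its volume. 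Summing over the $d$-dimensional cells gives multiplicity $\sum_{|I|=d}|\det(\mathbf{v}_i:i\in I)|=\vol Z$. (Alternatively, this multi-tiling can be obtained by pushing the tiling of $\mathbb{R}^m$ by $[0,1]^m$ forward along the projection $\mathbf{e}_i\mapsto\mathbf{v}_i$.)

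Next, using that solid angles are translation invariant, $\alpha_{Z+\mathbf{t}}(\mathbf{p})=\alpha_{(Z+\mathbf{t})-\mathbf{p}}(\mathbf{0})$, and substituting $\lambda=-\mathbf{p}$, I would rewrite
\[
A(Z+\mathbf{t}) \;=\; \sum_{\mathbf{p}\in(Z+\mathbf{t})\cap\mathbb{Z}^d}\alpha_{Z+\mathbf{t}}(\mathbf{p}) \;=\; \sum_{\lambda\in\mathbb{Z}^d}\alpha_{(Z+\mathbf{t})+\lambda}(\mathbf{0}),
\]
the last sum being effectively finite since $\alpha_{(Z+\mathbf{t})+\lambda}(\mathbf{0})=0$ unless $\mathbf{0}\in(Z+\mathbf{t})+\lambda$, which happens for only finitely many $\lambda$. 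By the definition of the solid angle at $\mathbf{0}$, and because for small $\varepsilon$ only finitely many translates $(Z+\mathbf{t})+\lambda$ meet $B_\varepsilon(\mathbf{0})$, this quantity equals
\[
\lim_{\varepsilon\to 0}\frac{1}{\vol B_\varepsilon(\mathbf{0})}\int_{B_\varepsilon(\mathbf{0})}\;\sum_{\lambda\in\mathbb{Z}^d}\mathbf{1}_{(Z+\mathbf{t})+\lambda}(\mathbf{x})\;d\mathbf{x}.
\]
Applying the multi-tiling statement to the translated zonotope $Z+\mathbf{t}$ (the property is translation invariant), the integrand equals $\vol Z$ for almost every $\mathbf{x}$, so the average over $B_\varepsilon(\mathbf{0})$ is $\vol Z$ for every $\varepsilon>0$; hence $A(Z+\mathbf{t})=\vol Z$. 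Since $Z$ is $d$-dimensional, $\vol Z\ge 1$, so $A(Z+\mathbf{t})>0$, and being a sum of nonnegative solid angles over $(Z+\mathbf{t})\cap\mathbb{Z}^d$ this forces that set to be nonempty.

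The main obstacle is the multi-tiling step: correctly organizing Shephard's half-open cell decomposition and verifying that each $d$-dimensional cell meets every $\mathbb{Z}^d$-orbit the right number of times while the lower-dimensional cells and the boundary translates are negligible. Once that is in place, the remaining steps are just Fubini together with the translation-equivariance of solid angles.
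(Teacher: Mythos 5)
Your argument is correct, and it is worth noting that it does more than the paper does at this point: the paper gives no proof of Lemma~\ref{lem:solidangleconstant} at all, but simply cites Theorems~1.2 and~6.1 of Gravin--Robins--Shiryaev together with the fact that $\vol Z$ is the multiplicity of the multi-tiling of $\RR^d$ by $\ZZ^d$-translates of $Z$. You instead prove that multi-tiling statement directly for lattice zonotopes, via the Shephard/Stanley half-open decomposition: each full-dimensional half-open cell spanned by linearly independent lattice vectors $\mathbf{w}_1,\dots,\mathbf{w}_d$ is an exact fundamental domain for the sublattice $L$ they generate, so its $\ZZ^d$-translates cover with constant multiplicity $[\ZZ^d:L]=|\det(\mathbf{w}_1,\dots,\mathbf{w}_d)|$, and summing over the full-dimensional cells gives multiplicity $\vol Z$ almost everywhere; the passage from the multiplicity function to the solid-angle sum via averaging the indicator sum over $B_\varepsilon(\mathbf{0})$ and exchanging the limit with the effectively finite sum is exactly the right mechanism, and the final deduction that $\vol Z\geq 1$ forces $(Z+\mathbf{t})\cap\ZZ^d\neq\emptyset$ is sound. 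What your route buys is a self-contained, elementary proof of precisely the special case needed, avoiding the general theory of translational multi-tilers; what the paper's citation buys is brevity and the more general statement for arbitrary multi-tiling lattice polytopes. Two small polish points: the vectors $\mathbf{w}_1,\dots,\mathbf{w}_d$ are a basis of the sublattice $L$, not a lattice basis of $\ZZ^d$ (your index formula shows you know this, but the phrase ``lattice basis'' should be adjusted), and the parenthetical alternative via pushing the unit-cube tiling of $\RR^m$ forward along $\mathbf{e}_i\mapsto\mathbf{v}_i$ is only heuristic as stated, since $\pi(\ZZ^m)$ may be a proper sublattice of $\ZZ^d$; the half-open-cell argument you give as the main line is the one to keep.
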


The following lemma for $2$-dimensional lattice zonotopes will be useful further below. It follows from \cite[Corollary 3.6]{codenottisantosschymura}, which more
generally proves that every lattice polygon with an interior lattice
point has a covering radius strictly less than 1, except for three polygons which are all not
zonotopes that we consider here. We provide a different (and arguably more elementary) proof here.

\begin{lemma}\label{2dimintpoint}
Every translate of a $2$-dimensional lattice zonotope in $\mathbb{R}^2$ generated by three or more pairwise linearly independent vectors contains an interior lattice point.
\end{lemma}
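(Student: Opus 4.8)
The plan is to establish the equivalent assertion that, writing $Z=Z(\mathbf{v}_1,\dots,\mathbf{v}_m)$ with $\mathbf{v}_1,\dots,\mathbf{v}_m\in\mathbb{Z}^2$ pairwise linearly independent and $m\ge 3$, the set $\operatorname{int}(Z+\mathbf{t})$ contains a lattice point for every $\mathbf{t}\in\mathbb{R}^2$. First I would make two harmless reductions. Replacing $\mathbf{v}_1$ by the primitive lattice vector $\mathbf{u}$ pointing in its direction shrinks $Z$, since $[\mathbf{0},\mathbf{u}]\subseteq[\mathbf{0},\mathbf{v}_1]$ and Minkowski sums are monotone; as the interior of a smaller polytope sits inside the interior of a larger one, it suffices to prove the statement for the smaller zonotope. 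Then, applying a transformation in $\operatorname{GL}_2(\mathbb{Z})$ (one exists because $\mathbf{u}$ is primitive, and it preserves $\mathbb{Z}^2$, preserves interiors, and only relabels $\mathbf{t}$), we may assume $\mathbf{u}=(1,0)$. Hence we may assume $Z=S+W$, where $S$ is the unit horizontal segment $\{(\lambda,0):0\le\lambda\le1\}$ and $W=Z(\mathbf{w}_2,\dots,\mathbf{w}_m)$ with each $\mathbf{w}_i=(a_i,b_i)\in\mathbb{Z}^2$ linearly independent from $(1,0)$ and from each other; in particular $b_i\ne 0$, so $|b_i|\ge 1$, and $W$ is $2$-dimensional since $\mathbf{w}_2,\mathbf{w}_3$ are linearly independent.

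The heart of the proof is a horizontal slicing argument. Since the range of the second coordinate over a Minkowski sum is the sum of the ranges of the summands, the second coordinate on $W$, and hence on $Z$, sweeps an interval of length $\beta=\sum_{i=2}^m|b_i|\ge m-1\ge 2$. Now fix $\mathbf{t}=(t_1,t_2)$. The second coordinate on $Z+\mathbf{t}$ sweeps an interval of length $\beta\ge 2$, so its interior contains an integer $k$. Consider the cross-section of $Z+\mathbf{t}$ by the line $\{y=k\}$: because $W$ is $2$-dimensional and $k$ lies strictly between the extreme values of the second coordinate, this cross-section is a nondegenerate horizontal segment, and unwinding $Z=S+W$ shows that, as a subset of $\mathbb{R}$, it equals $t_1+[0,1]+S_W$, where $S_W$ is the cross-section of $W$ at the corresponding height — itself a segment of strictly positive length. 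Therefore the cross-section of $Z+\mathbf{t}$ at height $k$ has length strictly greater than $1$, so its relative interior is an open real interval of length greater than $1$ and thus contains an integer $j$. Finally, since $(j,k)$ lies strictly inside the range of the second coordinate on $Z+\mathbf{t}$ and strictly inside the cross-section at that height, it belongs to $\operatorname{int}(Z+\mathbf{t})$; being a lattice point, it is the one we want.

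The main things to be careful about — but which I would state rather than belabor — are two standard facts about a $2$-dimensional convex polytope $P\subseteq\mathbb{R}^2$: that a horizontal line strictly between the minimum and maximum values of the second coordinate on $P$ meets $\operatorname{int}(P)$ in a segment of positive length, and that a point lying strictly inside such a cross-section is an interior point of $P$. Both are routine (a small-quadrilateral argument), but the first is exactly where the hypothesis of three or more generators enters: with only two generators one gets a parallelogram whose cross-sections can have length precisely $1$, and indeed the conclusion genuinely fails there (e.g.\ for the unit square translated by $(0,\tfrac12)$). Beyond these, every ingredient — the two reductions, additivity of widths under Minkowski sum, and ``an open interval longer than $1$ contains an integer'' — is immediate, so I do not anticipate a real obstacle; the only substantive work is arranging the normal form $Z=S+W$.
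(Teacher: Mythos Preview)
Your argument is correct and follows a genuinely different route from the paper's.  The paper reduces to three generators, writes $\bv_1=\alpha_2\bv_2+\alpha_3\bv_3$, embeds a translated copy $V=(1-\epsilon)\bv_1+Z(\bv_2,\bv_3)$ of the lattice parallelogram $Z(\bv_2,\bv_3)$ inside $Z$, invokes the Gravin--Robins--Shiryaev solid-angle identity (Lemma~\ref{lem:solidangleconstant}) to produce a lattice point in $\mathbf t+V$, and then does a three-way case analysis (interior, vertex, edge) to show that this lattice point actually sits in the interior of $\mathbf t+Z$.  Your approach instead normalises one generator to the primitive segment $[0,1]\times\{0\}$, observes that the remaining generators give vertical extent at least~$2$, chooses an integer height $k$ strictly inside that range, and then uses that the horizontal slice at height $k$ has length strictly greater than~$1$ to locate an integer abscissa.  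This is entirely elementary---it uses nothing beyond ``an open interval of length $>1$ contains an integer'' and basic convexity---and in particular avoids Lemma~\ref{lem:solidangleconstant} altogether.  The paper's route has the advantage that the solid-angle lemma is already part of its toolkit and is reused later (e.g.\ in Lemma~\ref{lem:width2}); your route has the advantage of being self-contained and making transparent exactly where the third generator is needed (to force $\beta\ge 2$ and to make $W$ two-dimensional so that the slice $S_W$ has positive length).  The two ``standard facts'' you flag at the end are indeed routine for convex polygons and would not need more than a sentence each.
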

\begin{proof}
It suffices to show that for any lattice zonotope $Z = Z (\bv_{1}, \bv_{2}, \bv_{3})$ generated by three pairwise linearly independent vectors $\bv_{1}, \bv_{2}, \bv_{3} \in \mathbb{Z}^{2}$, and any translation vector $\mathbf{t}\in \mathbb{R}^2$ the translated zonotope $Z+\mathbf{t}$ contains an interior lattice point.

Since $\bv_{1}, \bv_{2}, \bv_{3}$ are linearly dependent there exist $\alpha_{2}, \alpha_{3} \in \mathbb{R}_{> 0}$ and $\sigma_{2}, \sigma_{3}
\in \{ \pm 1\}$ such that $\bv_{1} = \sigma_{2} \alpha_{2} \bv_{2} + \sigma_{3} \alpha_{3} \bv_{3}$. Without loss of generality we may assume $\sigma _2,\sigma _3=1$ since $Z(\pm \mathbf{v}_1,\pm \mathbf{v}_2,\pm \mathbf{v}_3)$ (with exactly one fixed sign per
generator) is an integer translate of $Z$.
Let
$$\epsilon \ := \ \frac{1}{2 \max \{1, \alpha_{2}, \alpha_{3}\}} \, .$$
Then $\epsilon \in (0,\tfrac{1}{2})$ and $\epsilon \alpha_{i} \in (0,\tfrac{1}{2})$ for $i = 2,3$. The set $ V := (1 - \epsilon) \bv_{1} + Z (\bv_{2}, \bv_{3})$ is a subset of the zonotope $Z$ and a translate of the lattice parallelepiped $Z (\bv_{2}, \bv_{3})$. By Lemma \ref{lem:solidangleconstant}, the translate $\mathbf{t} + V$ contains an
integer point $\bx$. We argue by case distinction.

If $\bx$ lies in the interior of $\mathbf{t} + V$, it is also an interior lattice point of $\mathbf{t} + Z$ and the claim follows.

If $\bx$ is a vertex of $\mathbf{t} + V$, all vertices of $\mathbf{t} + V$ are lattice points and so, in
particular, $(1 - \epsilon) \bv_{1} + \mathbf{t} \in \ZZ^2$.
Since we can write
$$(1 - \epsilon) \bv_{1} + \mathbf{t} \ = \ \left( \left(1 - \tfrac{3}{2} \epsilon \right) \bv_{1} +
\tfrac{\epsilon \alpha_{2}}{2} \bv_{2} + \tfrac{\epsilon \alpha_{3}}{2} \bv_{3} \right) + \mathbf{t}
\, ,$$
where $0< 1 - \tfrac{3}{2} \epsilon,\tfrac{\epsilon \alpha_{2}}{2}, \tfrac{\epsilon \alpha_{3}}{2}< 1$
we obtain that $(1 - \epsilon) \bv_{1} +\mathbf{t}$ lies in the interior of $\mathbf{t} + Z$.

It remains to consider the case in which $\bx$ is in the relative interior of one of the facets of $\mathbf{t} + V$. Then there exists
another integer point $\bar{\bx}$ in the relative interior of the opposing facet.
Without loss of generality we may assume that $\bx = \left( (1 - \epsilon) \bv_{1} + \lambda \bv_{2} + 0 \cdot \bv_{3} \right) + \mathbf{t}$ for some $\lambda \in (0,1)$. Then for any $0<\delta<\min \left\{1 - \epsilon, \tfrac{1 - \lambda}{\alpha_{2}}, \tfrac{1 }{\alpha_{3}}\right\} $ we can write
$$\bx
  \ = \ \left( (1 - \epsilon) \bv_{1} + \lambda \bv_{2} + 0 \cdot \bv_{3} \right) + \mathbf{t}
  \ = \ \left( (1 - \epsilon - \delta) \bv_{1} + (\lambda + \delta \alpha_{2}) \bv_{2} + \delta \alpha_{3} \bv_{3} \right) + \mathbf{t}\, ,$$
  where $0<(1 - \epsilon - \delta),\lambda + \delta \alpha_{2},\delta \alpha_{3}<1$. Thus, again we see that $\bx$ is an interior lattice point of $\mathbf{t}+Z$. This completes the proof.
\end{proof}
We observe that Lemma~\ref{2dimintpoint} also holds for translates of $2$-dimensional lattice zonotopes within their affine hulls in higher dimensions.

For every $\bv \in \ZZ^3\setminus \{ \bzero \}$, let $\mathcal{L} (\bv)= |Z (\bv) \cap
\mathbb{Z}^{3}|-1$ denote the relative volume of the line segment $Z
(\bv) = \{\lambda \bv: \lambda \in [0,1]\}$ with respect to the lattice structure of the line
$\{\lambda \bv: \lambda \in \mathbb{R}\}$. We call $\bv$ \textbf{primitive} if $\mathcal{L} (\bv) = 1.$ The \textbf{width} of a lattice polytope $P$ in direction $\mathbf{v}\in
\mathbb{R}^d\setminus \{ \bzero \}$ is defined as
\[
w_\mathbf{v}(P) \ := \ \max \{\mathbf{v}^T\mathbf{x}\colon \mathbf{x}\in P\}-\min \{\mathbf{v}^T\mathbf{x}\colon \mathbf{x}\in P\} \, .
\]
The \textbf{lattice width} of $P$ is defined as $w(P) := \min \{w_\mathbf{v} (P) \colon
\mathbf{v}\in \mathbb{Z}^d\setminus \{ \bzero \}\}$. We note that the lattice width of a lattice polytope is always a nonnegative integer and attained in a primitive direction.

For any $\mathbf{v}\in \mathbb{R}^d\setminus \{ \bzero \}$ and $m\in \RR$, we denote
$H_\mathbf{v}(m)=\{\mathbf{x}\in \RR ^d\colon \mathbf{v}^T\mathbf{x}=m\}$. We call a
direction $\mathbf{v}\in \ZZ^d\setminus \{ \bzero \}$ \textbf{facet defining} for a
lattice polytope $P$ if $H_\mathbf{v}(m)\cap P$ is a facet of $P$ for some integer $m$.
The next lemma shows that in case of lattice zonotopes with lattice width $1$, the lattice width is always
attained by a facet-defining direction.
\begin{lemma}\label{lem:zonotopewidth1}
Let $Z$ be a $d$-dimensional lattice zonotope with lattice width $1$. Then there exists a facet
defining direction $\mathbf{v}\in \ZZ^d\setminus \{ \bzero \}$ with $w_\mathbf{v}(Z)=1$. In this case, $Z\cong Z'\times [0,1]$ where $Z'$ is a $(d-1)$-dimensional lattice zonotope.
\end{lemma}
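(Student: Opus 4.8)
The plan is to combine the formula for the width of a zonotope in a fixed direction with the fact, recorded just above the lemma, that the lattice width is attained at a primitive direction. Up to unimodular equivalence we may assume $Z=Z(\bv_1,\dots,\bv_m)$ with $\bv_i\in\ZZ^d$, and we fix a primitive $\bv\in\ZZ^d\setminus\{\bzero\}$ with $w_\bv(Z)=1$. A linear functional is maximized and minimized over a Minkowski sum summand by summand, and over a segment $[\bzero,\bv_i]$ the values of $\bx\mapsto\bv^T\bx$ sweep out an interval of length $|\bv^T\bv_i|$; hence
\[
1 \ = \ w_\bv(Z) \ = \ \sum_{i=1}^m\bigl|\bv^T\bv_i\bigr| \, .
\]
Each $\bv^T\bv_i$ is an integer, so exactly one generator --- after reindexing, say $\bv_1$, and after replacing it by $-\bv_1$ if necessary, which changes $Z$ only by a lattice translation --- satisfies $\bv^T\bv_1=1$, while $\bv^T\bv_i=0$ for every $i\ge 2$.

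Next I would pass to coordinates adapted to $\bv$. Since $\bv$ is primitive it is the last row of some $T\in\mathrm{GL}_d(\ZZ)$; replacing $Z$ by $TZ$ (which preserves the dimension and turns the functional $\bx\mapsto\bv^T\bx$ into $\by\mapsto y_d$) we may assume $\bv=\be_d$. Then $\bv_1=(\bw_1,1)$ and $\bv_i=(\bw_i,0)$ for $i\ge 2$, with $\bw_i\in\ZZ^{d-1}$. The unimodular shear $(\by,t)\mapsto(\by-t\bw_1,t)$ fixes every $(\bw_i,0)$, sends $(\bw_1,1)$ to $\be_d$, and leaves the last coordinate unchanged; applying it carries $Z$ to
\[
Z\bigl(\be_d,(\bw_2,0),\dots,(\bw_m,0)\bigr)\ =\ Z'\times[0,1],\qquad Z':=Z(\bw_2,\dots,\bw_m)\subseteq\RR^{d-1} \, .
\]
Here $Z'$ is a lattice zonotope, and since $Z$ --- hence $Z'\times[0,1]$ --- is $d$-dimensional, $Z'$ is $(d-1)$-dimensional; this is the asserted decomposition $Z\cong Z'\times[0,1]$. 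Finally, tracing the functional $\bx\mapsto\bv^T\bx$ through the reductions, in the final coordinates it is $\bx\mapsto x_d$, and $H_{\be_d}(1)\cap(Z'\times[0,1])=Z'\times\{1\}$ is a $(d-1)$-dimensional face, i.e.\ a facet; pulling back, $\bv$ is a facet-defining direction of $Z$ with $w_\bv(Z)=1$.

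I do not anticipate a genuine obstacle: the work is bookkeeping --- checking that each unimodular substitution sends integer generators to integer generators and fixes the functional $\bv^T\bx$ (equivalently the last coordinate), so that both $\dim Z'=d-1$ and the facet claim are inherited through the reductions. The one idea beyond this is the width identity $w_\bv(Z)=\sum_i|\bv^T\bv_i|$ together with the integrality of the $\bv^T\bv_i$, which is exactly what forces a single generator $\bv_1$ with $\bv^T\bv_1=1$ (all others lying in $\bv^\perp$) and thereby the product structure.
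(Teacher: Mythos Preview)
Your proof is correct and follows the same overall strategy as the paper: show that exactly one generator fails to be orthogonal to the width-$1$ direction $\bv$, then use a unimodular change of coordinates to split off a $[0,1]$-factor. The only real difference is in the first step---you invoke the clean identity $w_\bv(Z)=\sum_i|\bv^T\bv_i|$, whereas the paper argues more combinatorially (partitioning the generators into $I_0,I_+,I_-$ and checking that two generators with nonzero pairing would force $w_\bv(Z)\ge 2$); your version is a bit more direct, but the content is the same.
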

\begin{proof}
Let $Z=Z(\mathbf{v}_1,\ldots, \mathbf{v}_k)$ be a $d$-dimensional lattice zonotope with
lattice width $1$ and let $\bv\in \ZZ^d\setminus \{ \bzero \}$ be a primitive direction with $w_\bv (Z)=1$. Let $F=\{\mathbf{p}\in Z\colon \mathbf{v}^T\mathbf{p}=\max _{\mathbf{q}\in Z}\mathbf{v}^T\mathbf{q} \}$ be the face of $Z$ that is maximized in direction $\mathbf{v}$. Then $F$ is of the form
\[
F \ = \ \left\{\sum _{i\in I_0} \lambda _i\bv _i \colon 0\leq \lambda_i \leq 1\right\} +\sum _{i\in I_+}\bv_i
\]
where 
\[
I_0 \ = \ \left\{i\in [d]\colon \bv^T \bv _i =0 \right\}
\qquad \text{ and } \qquad
I_\pm \ = \ \left\{i\in [d]\colon \pm \bv^T \bv _i >0 \right\}. 
\]
If $|I_+\cup I_-|\geq 2$ then for any $i, j \in I_+\cup I_-$, $i\neq j$, then we observe that the number of values $|\{0,\bv^T\bv_{i},\bv^T\bv_{j},\bv^T\bv_{i}+\bv^T\bv_{j})\}|$ is always at least $3$. Since the vectors $0,\bv_{i},\bv_j,\bv_{i}+\bv_{j}$ are contained in $Z$ we have $w_\bv (Z)\geq 2$, a contradiction.

This shows that $|I_+\cup I_-| =1$ and it follows that $\bv$ is a facet-defining direction. Further, if we
denote by $\bw$ the unique generator such that $\bv^T\bw\neq 0$ then $\bv^T\bw=\pm 1$ since $w_\bv (Z)=1$. Without loss of generality, let $\bv^T\bw=-1$, that is, $\bw \in I_-$. Then $Z=F+[\mathbf{0},\bw]$. (In the other case, $Z$ is a translate of that polytope by the integer vector $\bw$.) Since $w_\bv (Z)=1$ there are no lattice points in
between the affine hyperplanes $\aff F$ and $\aff F +\bw$ and thus any lattice basis
$\{\mathbf{b}_1,\ldots,\mathbf{b}_{d-1}\}$ of $\aff F \cap \ZZ^d$ can be complemented with
$\bw$ to a basis of $\ZZ^d$. Let $\varphi\colon \RR^d \rightarrow \RR^d$ be the affine lattice
preserving map defined by $\bw\mapsto \mathbf{e}_d$ and $\mathbf{b}_i\mapsto \mathbf{e}_i$
for $1\leq i\leq d-1$. Then $\varphi (Z)=\varphi(F)\times [0,1]$ and, thus, $Z$ is
unimodularly equivalent to the product of the $(d-1)$-dimensional lattice zonotope $\varphi (F)$ and the unit segment. This proves the claim.
\end{proof}

\begin{remark}
Lemma~\ref{lem:zonotopewidth1} does not carry over to arbitrary lattice polytopes or zonotopes of width larger than $1$.
For example, the simplex with vertices $(0,0,0), (1,1,0),(1,0,1)$ and $(0,1,1)$ has lattice width $1$, since it is contained in the unit cube $[0,1]^3$. However, its width in all four (primitive) facet directions is $2$. Furthermore, if we consider the lattice parallelepiped generated by the vectors $[1,0,0],[0,1,0]$ and $[1,1,m]$ we see that it is contained in the axes-parallel box $[0,2]^2\times [0,m]$. Except for its vertices its only other  lattice points are $[1,1,1],[1,1,2],\ldots, [1,1,m-1]$. In particular, the lattice width is $2$, but the width in all of its primitive facet directions is equal to $m$.
\end{remark}

We use the lemmas above to obtain geometric constraints on $3$-dimensional lattice zonotopes of degree $2$.

\begin{lemma} \label{width}
Every $3$-dimensional lattice zonotope of degree $2$ has lattice width $1$ or is a lattice parallelepiped.
\end{lemma}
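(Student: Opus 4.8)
The plan is to argue by contradiction: suppose $Z$ is a $3$-dimensional lattice zonotope of degree $2$ that is not a parallelepiped and has lattice width $\geq 2$, and derive that $Z$ must in fact contain an interior lattice point in a dilate that witnesses degree $3$, i.e. $Z$ itself already has an interior lattice point, contradicting $\deg Z = 2$. Recall from the discussion opening this section that degree $2$ for a $3$-dimensional zonotope means precisely that $Z$ has no interior lattice point (since $c_3 = h_3^\ast = |\ehr_Z(-1)|$, and degree $2$ forces $c_3 = 0$). So the real task is: every $3$-dimensional lattice zonotope that is neither a parallelepiped nor of lattice width $1$ has an interior lattice point.

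First I would set up $Z = Z(\bv_1,\ldots,\bv_k)$ with $k \geq 4$ generators (if $k = 3$ and the generators span $\RR^3$ then $Z$ is a parallelepiped, so we may assume $k\geq 4$), pairwise linearly independent after merging parallel generators. Since $k \geq 4$ and we are in $\RR^3$, some three of the generators, say $\bv_1,\bv_2,\bv_3$, are linearly independent; the remaining generator $\bv_4$ lies in their span and hence there is a linear dependence among a triple of generators. The idea is to exhibit a $2$-dimensional ``sub-slice'' of $Z$ that is a translate of a $2$-dimensional lattice zonotope generated by three pairwise linearly independent vectors lying in a common lattice plane, apply Lemma~\ref{2dimintpoint} (in the version noted after its proof, valid within affine hulls) to get a lattice point in the relative interior of that slice, and then push it a little in the two transverse directions — exactly as in the case analysis of the proof of Lemma~\ref{2dimintpoint} — to land in the interior of $Z$. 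Concretely, if three generators $\bv_i,\bv_j,\bv_\ell$ are pairwise independent but linearly dependent, they span a lattice plane $\Pi$; the set $(1-\epsilon)\sum_{r\notin\{i,j,\ell\}}\bv_r + Z(\bv_i,\bv_j,\bv_\ell)$, suitably translated to sit inside $Z$, is a translate of a $2$-dimensional zonotope of the type Lemma~\ref{2dimintpoint} handles, so it contains a relative-interior lattice point, which we then perturb off $\Pi$ using the remaining (full-dimensional) directions.

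The place where I expect the argument to actually bite is when \emph{no} three of the generators are simultaneously pairwise independent and linearly dependent in a useful way — for instance if the only linear dependencies involve generators that are pairwise parallel to each other, or if the ``extra'' generators beyond a spanning triple are forced into awkward positions. In that regime I would instead use the lattice width hypothesis directly: pick a primitive direction $\bv$ with $w_\bv(Z) = w(Z) \geq 2$, look at the two ``interior'' lattice slabs $H_\bv(m)$ with $\min < m < \max$, and observe that the zonotope structure forces at least one such slab to meet $Z$ in a lattice polytope that is again (a translate of) a lower-dimensional zonotope covering enough lattice points. Combining the slab decomposition with Lemma~\ref{lem:solidangleconstant} (every translate of a full-dimensional lattice zonotope contains a lattice point) applied to the cross-section should produce the needed interior point. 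The main obstacle is handling the degenerate configurations of generators cleanly — i.e.\ making sure the ``perturbation into the interior'' step of the Lemma~\ref{2dimintpoint}-type argument always has room to move in \emph{both} transverse senses — and I would organize the proof as a short case distinction on the number and arrangement of generators, with the width-$\geq 2$ hypothesis supplying the slack in each case.
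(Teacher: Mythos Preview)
Your overall setup---contradiction, reduce to $k\ge 4$ pairwise independent generators, and produce an interior lattice point---matches the paper. But the case split and the tools assigned to each case are inverted, and this creates a real gap.

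In your ``three coplanar generators'' case you apply Lemma~\ref{2dimintpoint} to the set $(1-\epsilon)\sum_{r\notin\{i,j,\ell\}}\bv_r + Z(\bv_i,\bv_j,\bv_\ell)$. The lemma (and the remark extending it to higher ambient dimension) only applies to translates \emph{within the affine hull}, i.e., the carrying plane must intersect $\ZZ^3$. Your $(1-\epsilon)$-translate moves the plane off $\Pi$ by a non-lattice vector, so the new plane may contain no lattice points at all, and the lemma yields nothing. The paper repairs exactly this: it takes the primitive normal $\bz$ to $\Pi$, uses lattice width $\ge 2$ to find an integer level $\ell$ strictly between the two facet levels with $H_\bz(\ell)\cap\ZZ^3\neq\emptyset$, observes that $H_\bz(\ell)\cap Z$ is a translate of the facet $F$ (a $2$-dimensional lattice zonotope containing $Z(\bv_i,\bv_j,\bv_\ell)$), and then invokes Lemma~\ref{2dimintpoint} in that affine lattice. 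So the width hypothesis is consumed \emph{here}, not in the other case.

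Conversely, the case you flag as hard---no three generators coplanar---is the easy one, and needs no width assumption and no slab argument. After sign changes write $\bv_4=\alpha_1\bv_1+\alpha_2\bv_2+\alpha_3\bv_3$ with all $\alpha_i>0$; then for small $\epsilon>0$,
\[
\bv_4=\epsilon\alpha_1\bv_1+\epsilon\alpha_2\bv_2+\epsilon\alpha_3\bv_3+(1-\epsilon)\bv_4
\]
exhibits the lattice point $\bv_4$ as an interior point of $Z(\bv_1,\bv_2,\bv_3,\bv_4)\subseteq Z$. Your proposed route via cross-sections is both unnecessary and problematic, since a hyperplane section of a zonotope in an arbitrary primitive direction need not be a (translate of a) lattice zonotope, so neither Lemma~\ref{lem:solidangleconstant} nor Lemma~\ref{2dimintpoint} would apply to it.
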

\begin{proof}
Let $Z$ be a $3$-dimensional lattice zonotope of lattice width bigger than $1$ and assume that $Z$ is generated by pairwise linearly independent vectors $\bv_{1}, \dots, \bv_{k} \in \mathbb{Z}^{3}$ for some $k \geq 4$.
We will show that $Z$ contains a lattice point in its interior (and thus is of degree~3).

Since $Z$ is $3$-dimensional, there is a set of three generating vectors that forms a basis of $\mathbb{R}^{3}$, say $\{\bv_{1}, \bv_{2}, \bv_{3}\}$.
Thus, there exist $\alpha_{i} \in \mathbb{R}_{\geq 0}$ and $\sigma_{i} \in \{1, -1\}$ for all $i \in \{1, 2, 3\}$ such that $ \bv_{4} = \sigma_{1} \alpha_{1} \bv_{1} + \sigma_{2} \alpha_{2} \bv_{2} + \sigma_{3} \alpha_{3} \bv_{3}.$ Since $Z(\pm \bv_{1}, \dots, \pm \bv_{k} )$ (again fixing one sign per generator) is a translate of $Z$ by a vector in $\mathbb{Z}^d$ we may assume that $\sigma _1=\sigma _2=\sigma _3=1$. 

We distinguish two cases. First, we assume that $\alpha_{i} > 0$ for all $i \in \{1, 2, 3\}$. If we set
$$\epsilon \ := \ \frac{1}{2 \max \left\{1, \alpha_{i}: i \in \{1, 2, 3\}\right\}}$$ 
then we have $1 - \epsilon \in (0,1)$ and $\epsilon \alpha_{i} \in (0,1)$ for all $i \in \{1, 2, 3\},$ and we can write
\[
\bv_{4} \ = \ \alpha_{1} \bv_{1} + \alpha_{2} \bv_{2} + \alpha_{3} \bv_{3} \ = \ \epsilon
\alpha_{1} \bv_{1} + \epsilon \alpha_{2} \bv_{2} + \epsilon \alpha_{3} \bv_{3} + (1 -
\epsilon) \bv_{4} \, .
\]
Thus, $\bv_{4}$ lies in the interior of $Z(\bv_{1}, \bv_{2}, \bv_{3}, \bv_{4})\subseteq Z$.

In the other case, if there is an index $j \in \{1, 2, 3\}$ such that $\alpha_{j} = 0$,
then $\alpha_{i} > 0$ for all $i \in \{1, 2, 3\} \setminus \{j\}$ since the generating
vectors are pairwise linearly independent. Without loss of generality, let $\alpha_{1} =
0$. Then $\bv_{2}, \bv_{3}, \bv_{4}$ span a plane. Let $\mathbf{v}\in
\mathbb{Z}^3\setminus \{ \bzero \}$ be a normal vector to this plane, and let 
\[
  m \ := \ \min \left\{\mathbf{v}^T\mathbf{x}\colon \mathbf{x}\in Z \right\}
    \ <  \ \max \left\{\mathbf{v}^T\mathbf{x}\colon \mathbf{x}\in Z \right\}
    \ =: \ M \, . 
\]
Since $w_\mathbf{v}(Z)\geq 2$, there exists an integer $m<\ell <M$ such that $H_\mathbf{v}(\ell)\cap \ZZ^3\neq \emptyset$. Further, since $Z$ is a zonotope, $F:=Z\cap H_\mathbf{v}(m)$ and $\bar F:=Z\cap H_\mathbf{v}(M)$ are congruent facets of $Z$ that are integer translates of a zonotope generated by a superset of $\{\bv_2,\bv_3,\bv_4\}$. It follows that $H_\mathbf{v}(\ell)\cap \conv(F\cup \bar F)$ is a translate of $F$ (and $\bar F$) and thus contains an element of the affine lattice $H_\mathbf{v}(\ell)\cap \ZZ^3$ in its relative interior by Lemma~\ref{2dimintpoint}; here we are using our assumption that the generators are pairwise linearly independent. Since $H_\mathbf{v}(\ell)\cap \conv(F\cup \bar F)$ is not contained in a facet this lattice point lies in the interior of $Z$, and the proof is complete.
\end{proof}
Next, we focus on the geometry of lattice parallelepipeds of width larger than $1$.

\def\JPicScale{0.8}
\begin{figure}[htb]
\begin{center}
\ifx\JPicScale\undefined\def\JPicScale{1}\fi
\unitlength \JPicScale mm
\begin{picture}(72.5,69.5)(0,0)
\color{blue}
\linethickness{0.3mm}
\put(3.75,25.62){\line(1,0){68.75}}
\put(72.5,25.62){\vector(1,0){0.12}}
\linethickness{0.3mm}
\put(30,0){\line(0,1){62.5}}
\put(30,62.5){\vector(0,1){0.12}}
\linethickness{0.3mm}
\multiput(5,10.62)(0.2,0.12){281}{\line(1,0){0.2}}
\put(5,10.62){\vector(-3,-2){0.12}}
\color{black}
\linethickness{0.3mm}
\multiput(25,45.62)(0.12,-0.48){42}{\line(0,-1){0.48}}
\linethickness{0.3mm}
\multiput(22.5,13.12)(0.12,0.2){63}{\line(0,1){0.2}}
\linethickness{0.3mm}
\multiput(22.5,13.12)(0.26,0.12){104}{\line(1,0){0.26}}
\linethickness{0.3mm}
\multiput(50,25.62)(0.12,0.2){63}{\line(0,1){0.2}}
\linethickness{0.3mm}
\multiput(17.5,33.12)(0.12,0.2){63}{\line(0,1){0.2}}
\linethickness{0.3mm}
\multiput(17.5,33.12)(0.12,-0.48){42}{\line(0,-1){0.48}}
\linethickness{0.3mm}
\multiput(45,45.62)(0.12,0.2){63}{\line(0,1){0.2}}
\linethickness{0.3mm}
\multiput(45,45.62)(0.12,-0.48){42}{\line(0,-1){0.48}}
\linethickness{0.3mm}
\multiput(52.5,58.12)(0.12,-0.48){42}{\line(0,-1){0.48}}
\color{blue}
\put(70.62,21.88){\makebox(0,0)[cc]{$x_2$}}
\put(5,15){\makebox(0,0)[cc]{$x_1$}}
\put(34.38,61.88){\makebox(0,0)[cc]{$x_3$}}
\color{black}
\linethickness{0.3mm}
\multiput(30,25.62)(0.26,0.12){104}{\line(1,0){0.26}}
\linethickness{0.3mm}
\multiput(17.5,33.12)(0.26,0.12){104}{\line(1,0){0.26}}
\linethickness{0.3mm}
\multiput(25,45.62)(0.26,0.12){104}{\line(1,0){0.26}}
\put(70,38.12){\makebox(0,0)[cc]{$[-1,1,0]$}}

\put(20,6.88){\makebox(0,0)[cc]{$[1,1,0]$}}

\put(14.38,48.12){\makebox(0,0)[cc]{$[1,1,2]$}}

\linethickness{0.3mm}
\put(23.75,29.38){\circle*{2.03}}

\linethickness{0.3mm}
\put(35,45.62){\circle*{2.03}}

\linethickness{0.3mm}
\put(41.25,41.88){\circle*{2.03}}

\linethickness{0.3mm}
\put(45,45.62){\circle*{2.03}}

\linethickness{0.3mm}
\put(25,45.62){\circle*{2.03}}

\linethickness{0.3mm}
\put(52.5,58.12){\circle*{2.03}}

\linethickness{0.3mm}
\put(57.5,38.12){\circle*{2.03}}

\linethickness{0.3mm}
\put(50,25.62){\circle*{2.03}}

\linethickness{0.3mm}
\put(17.5,33.12){\circle*{2.03}}

\linethickness{0.3mm}
\put(22.5,13.12){\circle*{2.03}}

\linethickness{0.3mm}
\put(30,25.62){\circle*{2.03}}

\linethickness{0.3mm}
\put(51.25,41.88){\circle*{2.03}}

\linethickness{0.3mm}
\put(33.75,29.38){\circle*{2.03}}

\linethickness{0.3mm}
\put(40,25.62){\circle*{2.03}}

\end{picture}
\end{center}
\caption{The parallelepiped (and its lattice points) in Lemma~\ref{lem:width2}.}\label{fig:width2}
\end{figure}
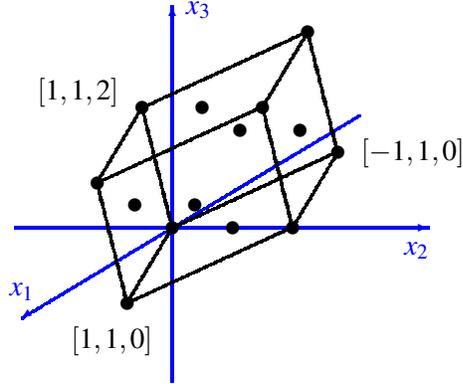

\begin{lemma}\label{lem:width2}
Every $3$-dimensional lattice parallelepiped of degree $2$, with lattice width greater
than $1$ and only primitive generators is unimodularly equivalent to the parallelepiped $Z(\mathbf{v}_1,\mathbf{v}_2,\mathbf{v}_3)$ where
\[
\mathbf{v}_1 = [1,1,0] \, , \ \mathbf{v}_2 = [-1,1,0] \, , \ \mathbf{v}_3 = [1,1,2] \, .
\]
\end{lemma}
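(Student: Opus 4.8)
The plan is to start from a $3$-dimensional lattice parallelepiped $Z=Z(\bw_1,\bw_2,\bw_3)$ with primitive generators, degree~$2$, and lattice width $\geq 2$, and pin down the three generators up to $\GL_3(\ZZ)$. The first step is to use Theorem~\ref{thm:Stanley}: since the parallelepiped has degree~$2$, its only interior lattice point is the sum $\bw_1+\bw_2+\bw_3$ of its generators (as observed at the start of the section, the second dilate always contains this sum in its interior, so degree~$2$ forces the first dilate to have a unique interior point). Combining this with Theorem~\ref{thm:Stanley} gives $g(\{\bw_1,\bw_2,\bw_3\})=|\det(\bw_1\,\bw_2\,\bw_3)|=1$, so $\{\bw_1,\bw_2,\bw_3\}$ is actually a lattice basis of $\ZZ^3$. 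Applying a unimodular transformation, we may therefore assume $\bw_1=\be_1$, $\bw_2=\be_2$, $\bw_3=\be_3$ --- but wait, that would make $Z$ the unit cube, which has width $1$. The resolution is that unimodular equivalence includes no freedom to change which segments we use; so instead I must be more careful: the parallelepiped is a lattice basis parallelepiped, hence unimodularly equivalent to $[0,1]^3$ \emph{as a polytope}, which has width~$1$ --- contradiction. So in fact the hypothesis ``lattice width greater than~$1$'' together with the degree-$2$ condition must be incompatible with the generators forming a basis, meaning $|\det|\neq 1$, which contradicts the interior-point count.

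So the real first step is to get the interior-point structure right. A parallelepiped $Z(\bw_1,\bw_2,\bw_3)$ with $|\det|=D$ contains $D-1$ interior lattice points when... no: by Theorem~\ref{thm:Stanley} and Ehrhart reciprocity the number of interior points is the constant term issue --- more simply, $\ehr_Z(n)=\sum_I g(I)n^{|I|}$ and $\#\{$interior points$\}=|\ehr_Z(-1)|$. For a parallelepiped all proper subsets are linearly independent, and the top term is $D\,n^3$ where $D=|\det|$; degree~$2$ means $h_3^\ast=c_3=0$, i.e. no interior lattice points, i.e. $\ehr_Z(-1)=0$. Expanding $\ehr_Z(-1)=\sum_I g(I)(-1)^{|I|}$ and using that the lowest terms contribute $\pm 1$, this is a genuine constraint relating $D$ and the $\mathcal L(\bw_i)$ and the $2\times 2$ gcd's. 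Since all generators are primitive, $\mathcal L(\bw_i)=1$, and one computes $\ehr_Z(-1)=(-D)+ (g_{12}+g_{13}+g_{23}) - 3 + 1 = 0$, giving $D = g_{12}+g_{13}+g_{23}-2$ where $g_{ij}=g(\{\bw_i,\bw_j\})$ is the relative area of the parallelogram facet. This is the key identity.

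The second step is to bound the facet areas. Each facet $Z(\bw_i,\bw_j)$ is a $2$-dimensional lattice parallelogram with no interior lattice point (a translate of it is a facet of $Z$, and $Z$ has degree~$2$ so its facets --- being $2$-dimensional --- have degree at most... actually facets of a degree-$2$ polytope need not have degree $\le 1$, but a \emph{parallelogram} facet: its only candidate interior point is $\bw_i+\bw_j$, which lies on the facet's relative boundary, so $g_{ij}$ equals the number of boundary points structure). Actually a lattice parallelogram generated by primitive vectors has relative area $g_{ij}$ and contains $g_{ij}-1$ interior points, so no interior point forces $g_{ij}=1$ --- unless the parallelogram, though its generators are primitive, still has interior points; $g_{ij}=1$ iff $\{\bw_i,\bw_j\}$ spans the plane's lattice. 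Hmm, but then $D=1+1+1-2=1$ again, the unit cube. So the escape must be that one facet \emph{does} have interior points but those points, when lifted, are not interior to $Z$ --- this forces exactly the degenerate configuration where two generators are ``parallel mod a hyperplane''. I expect the main obstacle to be precisely this case analysis: showing that the only way to have $D=g_{12}+g_{13}+g_{23}-2>1$ with the degree-$2$ (no interior point of $Z$) condition and primitivity is $g_{12}=2$, $g_{13}=g_{23}=1$ (or a permutation), forcing $D=2$, and then that the unique such lattice configuration --- using that the width-$2$ condition rules out any further reduction --- is $Z(\be_1+\be_2,-\be_1+\be_2,\be_1+\be_2+2\be_3)$.

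The third step, once $D=2$ and the facet areas are $(2,1,1)$, is the normalization: say $g_{23}=2$ so $\bw_2,\bw_3$ generate an index-$2$ sublattice of the plane $\aff\{\bzero,\bw_2,\bw_3\}\cap\ZZ^3$, while $\bw_1$ together with each of $\bw_2,\bw_3$ spans a unimodular parallelogram. Choose a basis $\be_1,\be_2,\be_3$ of $\ZZ^3$ with $\bw_1=\be_3$ suitably arranged; then $\bw_2,\bw_3$ lie in a coordinate plane and, being primitive with $\gcd$ of minors $=2$, are (up to $\GL_2(\ZZ)$ of that plane and signs) $\bw_2=\be_1+\be_2$, $\bw_3=-\be_1+\be_2$, after which the primitivity of $\bw_1$ and the no-interior-point condition fix the third coordinate to give $\bw_1=\be_1+\be_2+2\be_3$ (relabeling). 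One then checks directly via Theorem~\ref{thm:Stanley} that this $Z$ indeed has $\ehr_Z(n)=(n+1)^3+c_1(n+1)^2n+c_2(n+1)n^2$ with $c_3=0$, confirming degree~$2$, and that its lattice width is~$2$ (it sits in $[0,2]^2\times[0,2]$ with the right lattice points, as in the Remark), so it genuinely falls under this lemma's hypotheses. Matching against the displayed $\bv_1,\bv_2,\bv_3$ up to permutation and an integer translation completes the proof. The delicate point throughout will be keeping track of which sign choices and coordinate changes are unimodular, and verifying that the width-$>1$ hypothesis is exactly what excludes the alternative $(2,1,1)$-area configurations that would reduce to products $Q\times[0,1]$.
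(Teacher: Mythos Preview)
Your Ehrhart identity is correct and useful: with primitive generators, Stanley's formula and $c_3=0$ give
\[
D \ = \ g_{12}+g_{13}+g_{23}-2,
\]
where $D=|\det(\bw_1,\bw_2,\bw_3)|$ and $g_{ij}$ is the relative area of the facet $Z(\bw_i,\bw_j)$. But your subsequent claim that the only admissible solution is $(g_{12},g_{13},g_{23})=(2,1,1)$ with $D=2$ is simply wrong. For the target parallelepiped $Z(\bv_1,\bv_2,\bv_3)$ one computes $g_{12}=g_{13}=g_{23}=2$ and $D=4$ (each facet has a unique interior lattice point at the midpoint of its diagonals, exactly as the paper establishes). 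The configuration $(2,1,1)$ you describe is realized, for instance, by $\bw_1=(1,1,0)$, $\bw_2=(-1,1,0)$, $\bw_3=(0,0,1)$, which is visibly $Q\times[0,1]$ and has lattice width~$1$; it is precisely what the hypothesis excludes, not the answer. Consequently your normalization step in ``Step 3'' starts from the wrong data and cannot arrive at the stated generators except by accident.

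More seriously, the identity $D=g_{12}+g_{13}+g_{23}-2$ by itself has infinitely many solutions in positive integers (e.g.\ $(3,2,1)$, $(3,3,1)$, $(k,1,1)$ for any $k\ge 1$), so the real content of the lemma lies in showing that lattice width $>1$ forces $g_{ij}=2$ for \emph{all} three pairs. You never give an argument for this; the phrases ``I expect the main obstacle to be precisely this case analysis'' and ``the width-$>1$ hypothesis is exactly what excludes the alternative\ldots configurations'' are placeholders, not proofs. The paper handles this via solid-angle sums: first bounding each $g_{ij}\le 2$ by slicing $Z$ parallel to a facet with $g_{ij}\ge 3$ and finding too many boundary lattice points on the intermediate slice, and then ruling out any $g_{ij}=1$ by showing that a facet with no interior point forces a facet-defining direction of width~$1$. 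If you want to salvage the Ehrhart-counting approach, you still need an argument of comparable strength to bound the $g_{ij}$ from above and below; the identity alone does not do this.
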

\begin{proof}
Let $Z=Z(\bv_1,\bv_2,\bv_3)$ be a $3$-dimensional lattice parallelepiped of degree $2$,
lattice width greater than $1$ and primitive generators $\bv_1,\bv_2,\bv_3$. In order to
prove the claim we first show that every facet contains at most one lattice point besides
its vertices. Since $\bv_1,\bv_2,\bv_3$ are primitive, this lattice point has to lie in
the relative interior of the facet. To see that, without loss of generality we assume that the facet
$F:=Z(\bv_1,\bv_2)$ contains at least $2$ lattice points in its relative interior. Then
the solid-angle sum of $F$ with respect to the induced metric of its affine hull is at
least~$3$. Let $\bz\in \ZZ^3\setminus \{ \bzero \}$ be a normal vector of $F$, then
$F=Z\cap H_\bz (0)$ and $F+\bv_3=Z(\bv_1,\bv_2)+\bv_3=Z\cap H_\bz (M)$ for some $M\in \ZZ$. Since $Z$ has
width larger than $1$ there is an integer $\ell$ lying between $0$ and $M$ such that
$H_\bz(\ell)$ contains a lattice point. Since $F'=H_\bz(\ell)\cap Z$ is a translate of $F$
it also has solid-angle sum at least~$3$ by Lemma~\ref{lem:solidangleconstant}. Since $\bv_3$ is a primitive generator and $Z$
does not contain lattice points in its interior all lattice points of $F'$ must lie on the
relative interior of its edges. Every such lattice point contributes $\frac 1 2$ to the
solid-angle sum of $F'$ and therefore $F'$ contains at least $6$ lattice points. In particular,
there must be an edge of $F'$ that contains at least two lattice points in its relative
interior. This is a contradiction to $\bv_1$ and $\bv_2$ being primitive generators. We
thus conclude that every facet of $Z$ contains at most one lattice point in addition to its vertices. By central symmetry of the facets, any such lattice point must lie on the intersection of the diagonals of the facet.

Next, we show that $Z$ has exactly one lattice point in the relative interior of every
facet. Let us assume that there is a facet, say $F=Z(\bv_1,\bv_2)$, that does
not contain any lattice point except for its four vertices. Then we claim that not all of the four other
non-parallel facets contain an interior point. To see that, assume that this is the case.
Then, with the notation above, these four lattice points all lie on the hyperplane $H_\bz
(\ell)$ with $\ell=M/2$, and contribute each $\frac 1 2$ to the solid-angle sum of
$F'=H_\bz(\ell)\cap Z$ which therefore equals $2$, a contradiction since $F'$ is a
translate of $F$ which has solid-angle sum $1$. This shows that at most one pair of parallel facets contains a lattice point in each relative interior. If no facet contains a lattice point in
its relative interior, then $Z$ does not contain any lattice points other than its vertices and is
thus unimodularly equivalent to the unit cube which has width $1$, a contradiction. In the
other case, assume that $Z(\bv_2,\bv_3)$ does not contain an interior lattice point, but
$Z(\bv_1,\bv_3)$ contains a lattice point in its relative interior. Let $\bw\in
\ZZ^3\setminus \{ \bzero \}$ be a primitive, facet-defining direction for $Z(\bv_1,\bv_3)$. Then we
claim that $w_\bw (Z)=1$ --- otherwise, with analogous arguments as above, there is a slice of $Z$ parallel to $Z(\bv_1,\bv_3)$ but different from $Z(\bv_1,\bv_3)$ and $Z(\bv_1,\bv_3)+\bv_2$ with solid-angle sum $2$. This is a contradiction, since all lattice points in $Z$ are contained in $Z(\bv_1,\bv_3)$ and the parallel facet $Z(\bv_1,\bv_3)+\bv_2$. Therefore, $w_\bw (Z)=1$ and $Z$ has lattice width $1$, again a contradiction.

We therefore have proved that $Z$ contains precisely one lattice point at the intersection
of the diagonals of every facet. We claim that, up to unimodularly equivalence, there is only one such parallelepiped. For that we observe that the width in all three facet-defining directions is $2$. In particular, the lattice parallelepiped spanned by $\mathbf{b}_1=\bv_1, \mathbf{b}_2=\frac 1 2(\bv_1+\bv_2)$ and $\mathbf{b}_3=\frac 1 2(\bv_1+\bv_3)$ contains only its vertices as lattice points. Thus, its generators form a lattice basis of $\ZZ^3$. We conclude by observing that the map defined by $\mathbf{b}_1 \mapsto [1,1,0]$, $\mathbf{b}_2 \mapsto [0,1,0]$, $\mathbf{b}_3\mapsto [1,1,1]$ is unimodular, and maps $Z$ to the above lattice parallelepiped. This completes the proof.
\end{proof}

\begin{proof}[{Proof of Theorem~\ref{thm:degree2zonoclass}}]
Let $Z$ be a $3$-dimensional lattice zonotope of degree $2$. 

If $Z$ has lattice width $1$ then, by Lemma~\ref{lem:zonotopewidth1}, $Z$ is unimodularly equivalent to $Q\times [0,1]$ for some $2$-dimensional lattice zonotope $Q$. On the other hand, every $3$-dimensional lattice zonotope of this form has no interior points and is therefore of degree $2$. 

If $Z$ has width greater than $1$ then, by Lemma~\ref{width}, $Z$ is a parallelepiped. We will show that every $3$-dimensional lattice parallelepiped of degree $2$ and lattice width greater than $1$ has only primitive generators. This, together with Lemma~\ref{lem:width2} will then complete the proof.

Let $Z = Z (\bu, \bv, \bw)$ be a $3$-dimensional lattice parallelepiped of degree $2$ and lattice width greater than~$1$. 

First we show that $Z$ can have at most one non-primitive generator. For that we assume that both $\bu$ and $\bv$ are not primitive. Since $\bu$ is not primitive there exists a $0<\lambda <1$ such that $\lambda \bu \in \mathbb{Z}^3$. Let $\lambda_{\bu} := \min \{ \lambda \in (0,1): \, \lambda \bu \in \mathbb{Z}^{3}\}$ and
$\tilde{\bu} := \lambda_{\bu} \bu$; note that $\lambda_{\bu} \le \frac 1 2$. Let
$\lambda_{\bv}\leq \frac 1 2 $ and $\tilde{\bv}$ be defined analogously.
Consider the facets $F := Z (\bu, \bv)$ and $\overline{F} := Z (\bu, \bv) + \bw$ of $Z$,
and let $\mathbf{z}\in \ZZ^3\setminus \{ \bzero \}$ be an inner normal vector of $F$. Then
$\overline F$ is contained in $H_\mathbf{z}(M)$ for some integer $M>0$ and $F$ is contained in $H_\mathbf{z}(0)$. Since $w(Z) \geq 2$, there exists an integer $0<\ell < M$ such that $H_\mathbf{z}(\ell)\cap \ZZ^3\neq \emptyset$. We consider the translated parallelogram  
\[
Y \ := \ Z (\tilde{\bu}, \tilde{\bv}) + \frac{1}{2} (\tilde{\bu} + \tilde{\bv})
  \  = \  \left\{\lambda_{1} \tilde{\bu} + \lambda_{2} \tilde{\bv} : \, \frac{1}{2} \leq
\lambda_{i} \leq \frac{3}{2} \ \textup{ for } i  = 1,2 \right\} .
\]
We observe that $Y$ is contained in the relative interior of $F$ since $0\leq \lambda_{\bu},\lambda_{\bv}\leq \frac 1 2$. Thus, also $\overline{Y}=Y+\bw$ is contained in the relative interior of the parallel facet $\overline{F}$. The intersection $H_\mathbf{z} (\ell)\cap \conv \{Y\cup \overline{Y}\}$ is therefore a translate of the lattice parallelogram $Z (\tilde{\bu}, \tilde{\bv})$ which is contained in the interior of $Z$. By Lemma~\ref{lem:solidangleconstant}, this translate contains a lattice point in the affine lattice $H_\mathbf{z}(\ell)\cap \ZZ^3$, and thus, $Z$ contains a lattice point in the interior. This contradicts the assumption that $Z$ has degree $2$ and thus $Z$ can have at most one non-primitive generator.

In order to show that $Z= Z (\bu, \bv, \bw)$ cannot have a single non-primitive generator, we assume that $\bu$ and $\bv$ are primitive and that $\bw$ is not primitive, that is, $\mathcal{L}(\bw)>1$. Let $\lambda_{\bw} := \textup{min} \{ \lambda \in (0,1): \lambda \bw \in \mathbb{Z}^{3}\}$ and $\tilde{\bw} := \lambda_{\bw} \bw$.
Then $\tilde{\bw}$ is a primitive lattice vector and thus $Z(\bu,\bv,\tilde{\bw})$ has only primitive generators. Further, we observe that 
\[
Z \ = \bigcup_{i=0}^{\mathcal{L}(\bw)-1}  \left( Z(\bu,\bv,\tilde{\bw})+i\,\tilde{\bw}
\right) 
\]
is a ``stack'' of translates of $Z(\bu,\bv,\tilde{\bw})$. Since $Z$ is of degree $2$, $Z$ does not have any interior lattice points. Since $\mathcal{L}(\bw)>1$, $Z$ is the union of at least two copies of $Z(\bu,\bv,\tilde{\bw})$ that are attached at lattice translates of the facet $Z(\bu,\bv)$. Thus, $Z(\bu,\bv)$ cannot have interior lattice points. In order to arrive at a contradiction, we consider the lattice width of $Z(\bu,\bv,\tilde{\bw})$. If $Z(\bu,\bv,\tilde{\bw})$ has lattice width greater than $1$ then, by Lemma~\ref{lem:width2}, $Z(\bu,\bv,\tilde{\bw})$ is unimodulary equivalent to $Z(\mathbf{v}_1,\mathbf{v}_2,\mathbf{v}_3)$, where $\mathbf{v}_1 = [1,1,0]$, $\mathbf{v}_2 = [-1,1,0]$ and $\mathbf{v}_3 = [1,1,2]$. This polytope has a lattice point in the relative interior of every facet, a contradiction.  In the other case, if $Z(\bu,\bv,\tilde{\bw})$ has lattice width $1$, then by Lemma~\ref{lem:zonotopewidth1}, there exists a facet-defining direction $\bz \in \ZZ^3$ with $w_\bz (Z(\bu,\bv,\tilde{\bw}))=1$. We observe that since $Z$ is a ``stack'' of copies of $Z(\bu,\bv,\tilde{\bw})$, any direction that is orthogonal to the facets $Z(\bu,\tilde{\bw})$ or $Z(\bv,\tilde{\bw})$ is also facet defining for $Z$. Thus, if $\bz$ is orthogonal to the facets $Z(\bu,\tilde{\bw})$ or $Z(\bv,\tilde{\bw})$ then the lattice width of $Z$ is equal to $1$, a contradiction. In the other case, if $\bz$ is orthogonal to $Z(\bu,\bv)$, we observe that $Z(\bu,\bv,\tilde{\bw})$ does not contain any other lattice points except for its vertices. It follows that $\{\bu,\bv,\tilde{\bw}\}$ is a lattice basis for $\ZZ^3$ and $Z$ is unimodularly equivalent to $[0,1]^2\times [0,\mathcal{L}(\bw)]$. Thus, again it follows that $Z$ has lattice width $1$ which contradicts the assumptions. This shows that $Z$ can have only primitive generators. This completes the proof.
\end{proof}

\begin{proof}[{Proof of Theorem~\ref{thm:degree2zono}}]
By Theorem~\ref{thm:degree2zonoclass}, a $3$-dimensional lattice zonotope $Z$ has degree $2$ if and only if it is unimodularly equivalent to either $Z(\mathbf{v}_1,\mathbf{v}_2,\mathbf{v}_3)$, where $\mathbf{v}_1 = [1,1,0]$, $\mathbf{v}_2 = [-1,1,0]$ and $\mathbf{v}_3 = [1,1,2]$, or $Q\times [0,1]$ where $Q$ is a $2$-dimensional lattice zonotope. 

Let $Q$ be a $2$-dimensional lattice zonotope with Ehrhart polynomial $\ehr _Q(n)=(n+1)^2+c_1(n+1)n+c_2n^2$ and let $Z\cong Q\times [0,1]$. Then
\[
\ehr _{Z}(n)
\ = \ \left((n+1)^2+c_1(n+1)n+c_2n^2\right)(n+1)
\ = \ (n+1)^3+c_1(n+1)^2n+c_2(n+1)n^2 .
\]
In particular, a polynomial $(n+1)^3+c_1(n+1)^2n+c_2(n+1)n^2+c_3n^3$ is the Ehrhart polynomial of a lattice polytope of the form $Q\times [0,1]$ if and only if $c_1,c_2,c_3$ are integers, $c_3=0$ and, by Theorem~\ref{thm:2imzono}, $c_2=0$ or $c_2\geq c_1-1$. These are precisely the claimed conditions (i) and (ii).

Finally, by Theorem~\ref{thm:Stanley}, the Ehrhart polynomial of
$Z(\mathbf{v}_1,\mathbf{v}_2,\mathbf{v}_3)$ equals $(n+1)^3+3(n+1)n^2$. In particular, the Ehrhart polynomial of $Z(\mathbf{v}_1,\mathbf{v}_2,\mathbf{v}_3)$ satisfies the conditions (i) and (ii). This completes the proof.
\end{proof}

\section{$h^\ast$-polynomials}\label{sec:bases}
In~\cite{beulerian} a formula for the $h^\ast$-polynomial of lattice
zonotopes in terms of refined Eulerian polynomials was given. Let $S_d$ denote the set of all permutations on
$[d]=\{1,\ldots, d\}$. For every permutation $\sigma\in S_d$, we call $i<d$ a
\textbf{descent} of $\sigma$ if $\sigma (i)>\sigma (i+1)$. The number of descents of
$\sigma$ is denoted by $\des \sigma$. The \Def{Eulerian polynomial} is defined as $A^d (t) \
= \ \sum _{\sigma \in S_d} t^{\des (\sigma)}$. For all $1\leq j \leq d$, the
\Def{$(A,j)$-Eulerian polynomial} is defined as
\[
A^d _j  (t) \ = \sum _{\sigma \in S_d\atop \sigma (d) = d+1-j} t^{\des (\sigma)} \, .
\]
For example, 
\begin{align*}
A_1^3 (t) &= 1+t, & A_1^4 (t) &= 1+4t+t^2, \\
A_2^3 (t) &= 2t, & A_2^4 (t) &= 4t+2t^2, \\
A_3^3 (t) &= t+t^2, & A_3^4 (t) &= 2t+4t^2, \\
&& A_4^4 (t) &= t+4t^2+t^3.
\end{align*}
In~\cite{beulerian} it was shown that the $h^\ast$-polynomial of any lattice zonotope is a
nonnegative linear combination of $(A,j)$-Eulerian polynomials. The following is a summary of~\cite[Theorem 1.4, Theorem 4.2, Corollary 4.4]{beulerian}.

\begin{theorem}[Beck--Jochemko--McCullough \cite{beulerian}]\label{thm:hstarzonotope}
Let $\ehr _Z (n)=(n+1)^d+c_1 (n+1)^{d-1}n+\cdots +c_d n^d$ be the Ehrhart polynomial of a $d$-dimensional lattice zonotope $Z$. Then $c_1,\ldots, c_d$ are nonnegative integers and 
\[
h^\ast _Z (t) \ = \ A^{d+1} _1 (t) + c_1 \, A^{d+1} _2 (t) + \cdots + c_d \, A^{d+1}_{d+1} (t) \, .
\]
\end{theorem}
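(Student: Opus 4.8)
The plan is to combine two inputs: the Ehrhart polynomial form for lattice zonotopes (Theorem~\ref{cor:Ehrhartzonotope}), which already gives that $c_1,\dots,c_d$ are nonnegative integers, and the formula for the $h^\ast$-polynomial of lattice zonotopes in terms of refined Eulerian polynomials from~\cite{beulerian}. The statement as phrased is essentially a bookkeeping repackaging of \cite[Theorem 1.4, Theorem 4.2, Corollary 4.4]{beulerian}: the content is that the change of basis from $\{(n+1)^{d-j}n^j\}_{j=0}^d$ (in which the Ehrhart polynomial has coefficients $1,c_1,\dots,c_d$) to the basis $\{A^{d+1}_{j+1}(t)\}_{j=0}^d$ (in which the $h^\ast$-polynomial is expressed) is the \emph{identity} on coordinate vectors. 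So the proof is: cite the three results, identify which basis each lives in, and observe the coordinate vectors agree.

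Concretely, first I would recall from \cite[Theorem 1.4]{beulerian} the identity that for a $d$-dimensional lattice zonotope $Z$ generated by $\mathbf{v}_1,\dots,\mathbf{v}_m$, the $h^\ast$-polynomial is a sum over linearly independent subsets $I$ of the generators of $g(I)\,A^{d+1}_{|I|+1}(t)$ (or whatever the exact indexing convention in~\cite{beulerian} is — this needs to be matched carefully). Comparing this to Stanley's formula in Theorem~\ref{thm:Stanley}, $\ehr_Z(n)=\sum_I g(I)\,n^{|I|}$, one sees that the same nonnegative integer coefficients $\sum_{|I|=k} g(I)$ govern both expansions, once one knows that $n^k$ corresponds to $A^{d+1}_{k+1}(t)$ under the relevant transformation. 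Then I would invoke Theorem~\ref{cor:Ehrhartzonotope} to rewrite $\ehr_Z(n)$ in the basis $\{(n+1)^{d-j}n^j\}$ with coefficients $1,c_1,\dots,c_d$, and use \cite[Theorem 4.2 and Corollary 4.4]{beulerian} — which record exactly how the $(n+1)^{d-j}n^j$-coefficients match the $A^{d+1}_{j+1}(t)$-coefficients — to conclude $h^\ast_Z(t)=A^{d+1}_1(t)+c_1 A^{d+1}_2(t)+\cdots+c_d A^{d+1}_{d+1}(t)$. Nonnegativity and integrality of the $c_j$ is then immediate from Theorem~\ref{cor:Ehrhartzonotope} (or equally from Stanley's formula).

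The main obstacle is purely one of reconciling conventions: making sure that the indexing of the $(A,j)$-Eulerian polynomials used here (where $A^d_j$ sums over $\sigma$ with $\sigma(d)=d+1-j$) matches the indexing in~\cite{beulerian}, and that the degree shift $d\mapsto d+1$ is applied consistently — note that $h^\ast_Z(t)$ for a $d$-dimensional polytope has degree at most $d$, and $A^{d+1}_{d+1}(t)=t+\binom{?}{}t^2+\cdots+t^d$ has degree $d$, so the shift is forced and dimensionally consistent. One should also double-check the base case $c_1=\cdots=c_d=0$: then $\ehr_Z(n)=(n+1)^d$, $Z$ is unimodularly equivalent to the unit cube, and its $h^\ast$-polynomial is the Eulerian polynomial $A^d(t)=\sum_{j}A^{d+1}_{j}(t)$... wait, more carefully, the claim gives $h^\ast_Z(t)=A^{d+1}_1(t)$, which should be verified against the known $h^\ast$-polynomial of the unit cube; if there is a discrepancy it signals a convention mismatch that must be fixed before the general statement follows. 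Once the conventions are pinned down, the theorem is a one-line consequence of the cited results, so I would keep the proof short: state that it is a restatement of \cite[Theorem 1.4, Theorem 4.2, Corollary 4.4]{beulerian} together with Theorem~\ref{cor:Ehrhartzonotope}, and spell out only the basis-identification step in enough detail that the reader can see the coefficient vectors coincide.
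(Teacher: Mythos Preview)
Your proposal is correct and matches the paper's treatment: the paper does not prove this theorem at all but simply states it as a summary of \cite[Theorem~1.4, Theorem~4.2, Corollary~4.4]{beulerian}, exactly as you propose. Your hedging about the unit cube is easily resolved---$A^{d+1}_1(t)$ sums over $\sigma\in S_{d+1}$ with $\sigma(d+1)=d+1$, which bijects with $S_d$ preserving descent number, so $A^{d+1}_1(t)=A^d(t)$ is indeed the $h^\ast$-polynomial of $[0,1]^d$---but the paper itself offers no such verification, so your write-up already exceeds what is needed.
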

Theorem~\ref{thm:hstarzonotope} together with Theorems~\ref{thm:2imzono} and~\ref{thm:degree2zono} allow us to give a complete classification of all $h^\ast$-polynomials of lattice zonotopes of degree~$2$.

\begin{theorem}\label{thm:hstar2dim}
The polynomial
\[
1+h^\ast _1 t+h^\ast _2 t^2
\]
is the $h^\ast$-polynomial of a $2$-dimensional lattice zonotope if and only if
\begin{itemize}
\item[(i)] $h_1^\ast,h_2^\ast\in \ZZ_{\geq 0}$ and $h_1^\ast -h_2^\ast \equiv 1\bmod 2$ and
\item[(ii)] $h_2^\ast +1\leq h_1^\ast$ and
\item[(ii)] $h_1^\ast \leq 3h_2^\ast +3$ or $h_2^\ast =0$.
\end{itemize}
\end{theorem}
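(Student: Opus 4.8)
The plan is to translate the classification of Ehrhart polynomials in Theorem~\ref{thm:2imzono} through the explicit change of basis provided by Theorem~\ref{thm:hstarzonotope} specialized to $d=2$. Writing $\ehr_Z(n) = (n+1)^2 + c_1(n+1)n + c_2 n^2$ for a $2$-dimensional lattice zonotope $Z$, Theorem~\ref{thm:hstarzonotope} together with the values $A^3_1(t)=1+t$, $A^3_2(t)=2t$, $A^3_3(t)=t+t^2$ recorded above gives
\[
h^\ast_Z(t) \ = \ (1+t) + c_1(2t) + c_2(t+t^2) \ = \ 1 + (1+2c_1+c_2)\,t + c_2\,t^2 \, ,
\]
hence $h^\ast_1 = 1 + 2c_1 + c_2$ and $h^\ast_2 = c_2$. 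This system is invertible over $\QQ$, with $c_2 = h^\ast_2$ and $c_1 = \tfrac12(h^\ast_1 - h^\ast_2 - 1)$, so the assignment $(c_1,c_2) \mapsto (h^\ast_1,h^\ast_2)$ is a bijection between coefficient pairs in the two bases.

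First I would check that conditions (i) and (ii) of the theorem characterize exactly when $c_1$ and $c_2$ are nonnegative integers. Given $c_2 = h^\ast_2$, nonnegativity and integrality of $c_2$ is the condition $h^\ast_2 \in \ZZ_{\geq 0}$; granting this, $c_1 \in \ZZ$ is equivalent to $h^\ast_1 - h^\ast_2 \equiv 1 \bmod 2$ (which in particular forces $h^\ast_1 \in \ZZ_{\geq 0}$), and $c_1 \geq 0$ is equivalent to $h^\ast_2 + 1 \leq h^\ast_1$. So (i) together with (ii) is precisely the statement $c_1,c_2 \in \ZZ_{\geq 0}$, matching condition (i) of Theorem~\ref{thm:2imzono}.

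It remains to translate the condition ``$c_2 \geq c_1 - 1$ or $c_2 = 0$'' of Theorem~\ref{thm:2imzono}. Substituting the formulas above, $c_2 \geq c_1 - 1$ becomes $h^\ast_2 \geq \tfrac12(h^\ast_1 - h^\ast_2 - 1) - 1$, which rearranges to $h^\ast_1 \leq 3h^\ast_2 + 3$, while $c_2 = 0$ is $h^\ast_2 = 0$; together these give the last listed condition. Since every translation involved is an equivalence and the coefficient map is a bijection, combining with Theorem~\ref{thm:2imzono} yields that $1 + h^\ast_1 t + h^\ast_2 t^2$ is the $h^\ast$-polynomial of a $2$-dimensional lattice zonotope if and only if (i), (ii), and the final condition all hold.

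I do not expect a genuine obstacle here: the whole argument is a linear change of coordinates combined with Theorems~\ref{thm:2imzono} and~\ref{thm:hstarzonotope}. The only point worth emphasizing is the parity constraint $h^\ast_1 - h^\ast_2 \equiv 1 \bmod 2$ in (i), which is new relative to Treutlein's inequalities for general degree-$2$ lattice polytopes; it is forced by the coefficient $2$ in $A^3_2(t) = 2t$, equivalently by the fact that the ``middle'' generator count $c_1$ enters $h^\ast_1$ with multiplicity $2$.
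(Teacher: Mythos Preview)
Your proof is correct and follows essentially the same approach as the paper: specialize Theorem~\ref{thm:hstarzonotope} to $d=2$ to obtain the bijection $(c_1,c_2)\mapsto(h^\ast_1,h^\ast_2)=(1+2c_1+c_2,\,c_2)$, invert it, and translate the conditions of Theorem~\ref{thm:2imzono} term by term. If anything, your version is slightly more explicit in writing out the computation with the $A^3_j$ and in noting why the parity constraint arises.
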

\begin{proof}
From Theorem~\ref{thm:hstarzonotope} applied to the case $d=2$ we obtain that the
injective function $(c_1,c_2)\mapsto (h_1^\ast,h_2^\ast):=(1+2c_1+c_2,c_2)$ maps the coefficients of an Ehrhart polynomial $(n+1)^2+c_1(n+1)n+c_2n^2$ of a $2$-dimensional lattice zonotope to the coefficients of its $h^\ast$-polynomial. The inverse map is given by
\begin{eqnarray*}
c_1 &=& \frac{h_1^\ast -h_2^\ast -1}{2}\\
c_2&=&h_2^\ast \, .
\end{eqnarray*}
In particular, by Theorem~\ref{thm:2imzono}, the polynomial $1+h^\ast _1 t+h^\ast _2 t^2$, $h_1^\ast ,h_2^\ast \in \ZZ_{\geq 0}$, is the $h^\ast$-polynomial of a $2$-dimensional lattice zonotope if and only if (i) $\frac 1 2(h_1^\ast -h_2^\ast -1)$ is an integer which is equivalent to $h_1^\ast -h_2^\ast \equiv 1\bmod 2$ and nonnegative which is equivalent to $h_2^\ast +1\leq h_1^\ast$; and (ii) $h_2^\ast =0$ or $h_2\geq \frac 1 2(h_1^\ast -h_2^\ast -1)-1$ which is equivalent to $h_1^\ast \leq 3h_2^\ast +3$.
\end{proof}

\begin{theorem}\label{thm:hstardeg2}
The polynomial
\[
1+h^\ast _1 t+h^\ast _2 t^2
\]
is the $h^\ast$-polynomial of a $3$-dimensional lattice zonotope of degree $2$ if and only if
\begin{itemize}
\item[(i)] $h_1^\ast,h_2^\ast\in \ZZ_{\geq 0}$, $2h_1^\ast -h_2^\ast \equiv 1\bmod 6$ and $2h_2^\ast -h_1^\ast \equiv 4\bmod 6$ and
\item[(ii)] $\frac 1 2h_1^\ast-1\leq h_2^\ast \leq 2h_1^\ast-7$ and 
\item[(iii)] $h_2^\ast \geq h_1^\ast -5$ or $2h_2^\ast=h_1^\ast -2$.
\end{itemize}
\end{theorem}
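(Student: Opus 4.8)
The plan is to mirror the proof of Theorem~\ref{thm:hstar2dim}: push the classification in Theorem~\ref{thm:degree2zono} through the linear change of basis between Ehrhart coefficients and $h^\ast$-coefficients provided by Theorem~\ref{thm:hstarzonotope}. First I specialize Theorem~\ref{thm:hstarzonotope} to $d=3$, so that for a $3$-dimensional lattice zonotope $Z$ with $\ehr_Z(n)=(n+1)^3+c_1(n+1)^2 n+c_2(n+1)n^2+c_3 n^3$ we have
\[
h^\ast_Z(t) \ = \ A^4_1(t)+c_1\,A^4_2(t)+c_2\,A^4_3(t)+c_3\,A^4_4(t)\, .
\]
Reading $A^4_1=1+4t+t^2$, $A^4_2=4t+2t^2$, $A^4_3=2t+4t^2$ and $A^4_4=t+4t^2+t^3$ off the table preceding Theorem~\ref{thm:hstarzonotope} and collecting coefficients yields $h^\ast_3=c_3$ together with
\[
h^\ast_1 \ = \ 4+4c_1+2c_2+c_3\, , \qquad h^\ast_2 \ = \ 1+2c_1+4c_2+4c_3 \, .
\]
Since a $3$-dimensional lattice zonotope has degree $2$ or $3$, and degree $2$ is equivalent to $h^\ast_3=c_3=0$, Theorem~\ref{thm:degree2zono} tells us that the zonotopes in question are exactly those with $c_3=0$ and $(c_1,c_2)$ satisfying conditions~(i) and~(ii) there; in particular their $h^\ast$-polynomial has the stated shape $1+h^\ast_1 t+h^\ast_2 t^2$.

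Next I set $c_3=0$ and invert the remaining $2\times2$ system, obtaining
\[
c_1 \ = \ \frac{2h^\ast_1-h^\ast_2-7}{6}\, , \qquad c_2 \ = \ \frac{2h^\ast_2-h^\ast_1+2}{6}\, ,
\]
so that $(c_1,c_2)\mapsto(h^\ast_1,h^\ast_2)=(4+4c_1+2c_2,\,1+2c_1+4c_2)$ is an injection from the coefficient lattice of Ehrhart polynomials of $3$-dimensional degree-$2$ zonotopes onto a sublattice of $\ZZ^2$. I then translate the conditions of Theorem~\ref{thm:degree2zono} one by one. Integrality of $c_1$ is equivalent to $2h^\ast_1-h^\ast_2\equiv1\bmod6$ and integrality of $c_2$ to $2h^\ast_2-h^\ast_1\equiv4\bmod6$, giving condition~(i). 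The inequalities $c_1\ge0$ and $c_2\ge0$ become $h^\ast_2\le 2h^\ast_1-7$ and $h^\ast_2\ge\tfrac12 h^\ast_1-1$ respectively, giving condition~(ii). Finally, condition~(ii) of Theorem~\ref{thm:degree2zono} splits as $c_2\ge c_1-1$, equivalent to $3h^\ast_2\ge3h^\ast_1-15$, i.e.\ $h^\ast_2\ge h^\ast_1-5$, or $c_2=0$, equivalent to $2h^\ast_2=h^\ast_1-2$; this is condition~(iii).

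Every step above is an equivalence, so both implications of the theorem follow at once: a triple $(1,h^\ast_1,h^\ast_2)$ satisfying (i)--(iii) produces nonnegative integers $c_1,c_2$ meeting the hypotheses of Theorem~\ref{thm:degree2zono}, hence a $3$-dimensional degree-$2$ zonotope with exactly this $h^\ast$-polynomial, and conversely. I do not expect a genuine obstacle here beyond the arithmetic bookkeeping; the one point needing care is that no separate clause ``$h^\ast_1,h^\ast_2\ge0$'' is required, since $c_1,c_2\ge0$ already forces $h^\ast_1=4+4c_1+2c_2\ge4$ and $h^\ast_2=1+2c_1+4c_2\ge1$, and conversely the inequalities in condition~(ii) force the same bounds (adding them gives $h^\ast_1+h^\ast_2\ge5$, which together with $2h^\ast_1\ge h^\ast_2+7$ yields $h^\ast_2\ge1$ and then $h^\ast_1\ge4$).
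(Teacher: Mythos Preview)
Your proof is correct and follows essentially the same route as the paper: specialize Theorem~\ref{thm:hstarzonotope} to $d=3$, set $c_3=0$, invert the resulting linear map $(c_1,c_2)\mapsto(h^\ast_1,h^\ast_2)=(4+4c_1+2c_2,\,1+2c_1+4c_2)$, and translate the conditions of Theorem~\ref{thm:degree2zono} term by term. Your additional remark that the nonnegativity of $h^\ast_1,h^\ast_2$ is already forced by condition~(ii) is a small refinement the paper does not make explicit.
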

\begin{proof}
From Theorem~\ref{thm:hstarzonotope} we deduce that $(c_1,c_2)\mapsto (h_1^\ast,h_2^\ast):=(4+4c_1+2c_2,1+2c_1+4c_2)$ maps coefficients of Ehrhart polynomials of $3$-dimensional lattice zonotopes of degree $2$ to the coefficients of their $h^\ast$-polynomials. Equivalently,
\begin{eqnarray*}
c_1 &=& \frac{2h_1^\ast -h_2^\ast -7}{6}\\
c_2&=& \frac{2h_2^\ast -h_1^\ast +2}{6}\, .
\end{eqnarray*}
Thus, by Theorem~\ref{thm:degree2zono}, $1+h^\ast _1 t+h^\ast _2 t^2$, $h_1^\ast,h_2^\ast \in \ZZ_{\geq 0}$, is the $h^\ast$-polynomial of a $3$-dimensional lattice zonotope of degree $2$ if and only if (i) $2h_1^\ast -h_2^\ast -7$ and $2h_2^\ast -h_1^\ast +2$ are divisible by $6$ and greater or equal to $0$. This is equivalent to $2h_1^\ast -h_2^\ast \equiv 1\bmod 6$ and $2h_2^\ast -h_1^\ast \equiv 4\bmod 6$ and $\frac 1 2h_1^\ast-1\leq h_2^\ast \leq 2h_1^\ast-7$; and (ii) $2h_2^\ast -h_1^\ast +2=0$ or $2h_2^\ast -h_1^\ast +2\geq 2h_1^\ast -h_2^\ast -7-6$ which is equivalent to $h_2^\ast \geq h_1^\ast -5$.
\end{proof}

\textbf{Acknowledgements:} We would like to thank the anonymous referee for helpful comments. KJ was partially supported by the Wallenberg AI, Autonomous Systems and Software Program funded by
the Knut and Alice Wallenberg Foundation, as well as Swedish Research Council grant
2018-03968 and the G\"oran Gustafsson Foundation.

\nocite{janssen}
\bibliographystyle{amsplain}
\bibliography{bib}

\setlength{\parskip}{0cm} 

\end{document}